\newtheorem{thm}{Theorem}[section]
\newtheorem{prop}[thm]{Proposition}
\newtheorem{lem}[thm]{Lemma}
\newtheorem{cor}[thm]{Corollary}
\theoremstyle{definition}
\newtheorem{defin}[thm]{Definition}
\theoremstyle{remark}
\newtheorem{rem}[thm]{Remark}
\newtheorem{ex}[thm]{Example}
\numberwithin{equation}{section}
\newcommand{\x}{\times}
\newcommand{\ox}{\otimes}
\def\lacute{\mathopen{<}}
\def\racute{\mathopen{>}}
\newcommand{\scal}[2]{{\lacute#1,#2\racute}}
\newcommand{\ensemble}[2]{\{\,#1\mid#2\,\}}
\newcommand{\C}{{\mathbb C}}
\newcommand{\R}{{\mathbb R}}
\newcommand{\Z}{{\mathbb Z}}
\newcommand{\CaD}{\mathcal D} 
\newcommand{\CE}{\mathcal E}       
\newcommand{\CF}{\mathcal F}
\newcommand{\CH}{\mathcal H}
\newcommand{\CI}{\mathcal I}
\newcommand{\CJ}{\mathcal J}
\newcommand{\CM}{\mathcal M}
\newcommand{\CN}{\mathcal N}
\newcommand{\CO}{\mathcal O}
\newcommand{\vespa}{\vspace{1em}}
\newcommand{\dsur}[1]{\frac \partial{\partial#1} }               
\newcommand{\TX}{{T^*X}}
\newcommand{\TY}{{T^*Y}}
\newcommand{\TZ}{{T^*Z}}
\newcommand{\TL}{{T^*\Lambda}}
\newcommand{\TYX}{{T^*_YX}}
\newcommand{\TZX}{{T^*_ZX}}
\newcommand{\TZY}{{T^*_ZY}}
\newcommand{\dTX}{{\dot T^*X}}
\newcommand{\OX}{{{\mathcal O_X}}}
\newcommand{\OY}{{\mathcal O_Y}}
\newcommand{\cOXY}{{\mathcal O_{\widehat{X|Y}}}}
\newcommand{\DX}{{\mathcal D_X}}
\newcommand{\EX}{{{\mathcal E_X}}}
\newcommand{\EIX}{{{\mathcal E^\infty_X}}}
\newcommand{\EY}{{{\mathcal E_Y}}}
\newcommand{\EZ}{{{\mathcal E_Z}}}
\newcommand{\DZ}{{{\mathcal D_Z}}}
\newcommand{\EYX}{{\mathcal E_{Y\rightarrow X}}}
\newcommand{\BYX}{{{\mathcal B_{Y|X}}}}
\newcommand{\BIOX}{{{\mathcal B^\infty_{\{0\}|X}}}}
\newcommand{\BOX}{{{\mathcal B_{\{0\}|X}}}}
\newcommand{\BIYX}{{{\mathcal B^\infty_{Y|X}}}}
\newcommand{\BYfiX}{{{\mathcal B_{Y_\gf|X}}}}
\newcommand{\BIYfiX}{{{\mathcal B^\infty_{Y_\gf|X}}}}
\newcommand{\CYX}{{{\mathcal C_{Y|X}}}}
\newcommand{\CZY}{{{\mathcal C_{Z|Y}}}}
\newcommand{\CIOX}{{{\mathcal C^\infty_{\{0\}|X}}}}
\newcommand{\COX}{{{\mathcal C_{\{0\}|X}}}}
\newcommand{\CIYX}{{{\mathcal C^\infty_{Y|X}}}}
\newcommand{\CIZY}{{{\mathcal C^\infty_{Z|Y}}}}
\newcommand{\CZX}{{{\mathcal C_{Z|X}}}}
\newcommand{\CIZX}{{{\mathcal C^\infty_{Z|X}}}}
\newcommand{\CHM}{{Ch(\CM)}}
\newcommand{\CHMrs}{{Ch^2_\gL{\scriptstyle(r,s)}(\CM)}}
\DeclareMathOperator{\Ext}{\mathcal Ext}  
\DeclareMathOperator{\End}{\mathcal End}
\DeclareMathOperator{\RHOM}{\R\mathcal Hom}
\newcommand{\ga}{\alpha}
\newcommand{\gd}{\delta}
\newcommand{\gh}{\eta}
\newcommand{\gth}{\theta}
\newcommand{\gvt}{\vartheta}
\newcommand{\gl}{\lambda}
\newcommand{\gL}{\Lambda}
\newcommand{\gm}{\mu}
\newcommand{\gx}{\xi}
\newcommand{\gp}{\pi}
\newcommand{\gro}{\varrho}
\newcommand{\gs}{\sigma}
\newcommand{\gS}{\Sigma}
\newcommand{\gt}{\tau}
\newcommand{\gf}{\varphi}
\newcommand{\gF}{\Phi}
\newcommand{\gy}{\psi}             
\newcommand{\go}{\omega}           
\newcommand{\gO}{\Omega}
\begin{document}

\title{Regularity of a D-module along a submanifold}
\author{Yves Laurent}

\maketitle

\section*{Abstract}
We study how regularity along a submanifold of a differential or microdifferential
system can propagate from a family of submanifolds to another. The first result
is that a microdifferential system regular along a lagrangian foliation is regular. However,
when restricted to a fixed submanifold the corresponding result is true only
under a condition on the characteristic variety.

{\def\thefootnote{\relax}
\footnote{\hskip-0.6cm
{\it 1991 Mathematics Subject Classification.\/}~: 35A27.\newline
{\it Key words\/}~: D-modules, regularity, microlocal.
}} 

\thispagestyle{empty}
\newpage

\section*{Introduction}

This initial idea of this paper is a question M. Granger relative
to a paper with F. Castro-Jimenez \cite{GRANGER}:
If a holonomic $\DX$-module
is regular along a family of hyperplanes crossing on a linear subvariety, is it regular along
the intersection. 

If we consider this problem from the microlocal point of view,
it is equivalent to the following: if a holonomic $\DX$-module
is regular along the points of a submanifold, is it regular along
the submanifold itself. From this point of view, the problem is similar to the well-known theorem
of Kashiwara-Kawa{\"\i} \cite{KKREG}: if a $\DX$-module 
is regular along the points of an open set, it is regular on this set.

To solve the problem
we first prove a microlocal version of the result of Kashiwara-Kawa{\"\i}:

Consider a conic lagrangian foliation of an open set of the cotangent bundle 
to a complex manifold. Let $\CM$ 
be a holonomic microdifferential module defined on the open set. If $\CM$ is regular
along each leaf of the foliation then the module $\CM$ is regular.

Then we may give an answer to the initial problem. A natural framework
to state it is a "maximally degenerated" involutive submanifold
of the cotangent bundle. Such a variety carry a canonical conic lagrangian foliation
and contains also a canonical conic lagrangian submanifold, its degeneracy locus.
Then, the result is that if a holonomic microdifferential module is regular along the leaves of the foliation,
it is regular along the degeneracy locus under the condition that its characteristic variety
is contained in the maximally degenerated involutive submanifold.

This apply to our initial problem. If a holonomic differential or microdifferential module 
has its characteristic variety contained in the set $\scal x\gx=0$ of $T^*\C^n$ and regular along the hyperplanes containing the 
origin, then it is regular along the origin. In the same way, a holonomic module 
which is regular along the points of a submanifold $Y$ of a complex variety $X$
is regular along $Y$ under the condition that, in a neighborhood of the conormal to $Y$,
the characteristic variety is contained in the inverse image of $Y$ by the projection $\TX\to X$.

However, this result is not always true if the condition on the characteristic variety
is not satisfied. In the last example, it may be untrue if the singular support
of the module has components tangent to the variety $Y$. We show this by constructing
a counterexample.

In sections 1 and 2, we recall the different definitions of regularity and 
give some classical results that we will use later. 

In section 3, we prove a complex microlocal Cauchy theorem that we use in section
4 to prove our main result. In section 5, we show how this applies to 
maximally degenerated involutive manifolds and give examples.

Section 6, is devoted to the calculation of a counterexample when the
condition on the characteristic variety is not fulfilled.

\section{Regularity}

In dimension $1$ regularity of $\DX$-modules is equivalent to the
notion of differential equation with regular singularity. In higher
dimension, there are two different kind of regularity. The first one
is global, that is concerns a $\DX$-module on an open set while the
second is relative to a subvariety. In both cases, there is an equivalence
between growth conditions on the solutions and algebraic conditions
on the module itself. 

Algebraic conditions uses the so-called V-filtration which is a natural
extension of the algebraic conditions in the definition of "regular singular".

Let $X$ be a complex manifold and $Y$ a submanifold of $X$. Let $\OX$ be
the sheaf of holomorphic functions on $X$, $\DX$ the sheaf of differential operators
on $X$ with coefficients in $\OX$.

The sheaf $\DX$ is provided with the usual filtration by the order
of operators, this filtration will be denoted by $(\CaD_{X,m})_{m\ge 0})$.
Kashiwara defined in \cite{KVAN} an other filtration, the $V$-filtration, by:
\begin{equation}
V_k\DX=\ensemble{P\in\DX}{\forall \ell\in \Z, P\CI_Y^\ell\subset \CI_Y^{\ell-k}}
\end{equation}
where $\CI_Y$ is the ideal of definition of $Y$ and $\CI_Y^\ell=\OX$ if $\ell\le 0$.

If $Y$ is given in local coordinates by $Y=\ensemble{(x_1,\dots,x_p,t_1,\dots,t_q)}{t=0}$,
then the function $x_i$ and the derivations $D_{x_i}=\dsur{x_i}$ are of order $0$ for the V-filtration
while $t_j$ is of order $-1$ and $D_{t_j}$ of order $1$.

\vespa
Let $\gt:T_YX\to Y$ be the normal bundle to $Y$ in $X$ and $\CO_{[T_YX]}$ the sheaf of holomorphic
functions on $T_YX$ which are polynomial in the fibers of $\gt$. Let $\CO_{[T_YX]}[k]$ be the subsheaf
of $\CO_{[T_YX]}$ of homogeneous functions of degree $k$ in the fibers of $\gt$. There are canonical isomorphisms
between $\CI_Y^k/\CI_Y^{k-1}$ and $\gt_*\CO_{[T_YX]}[k]$, between $\bigoplus\CI_Y^k/\CI_Y^{k-1}$
and $\gt_*\CO_{[T_YX]}$. Hence 
the graded ring $gr^V\DX$ associated to the V-filtration on $\DX$ acts naturally 
on $\CO_{[T_YX]}$. An easy calculation \cite{SCHAPBOOK} shows that as a subring of $\End(\gt_*\CO_{[T_YX]})$ it is identified to 
$\gt_*\CaD_{[T_YX]}$ the sheaf of differential
operators on $T_YX$ with coefficients in $\CO_{[T_YX]}$ .

The Euler vector field $\gth$ of $T_YX$ is the vector field which acts on  $\CO_{[T_YX]}[k]$
by multiplication by $k$. Let $\gvt$ be any differential operator in $V_0\DX$ whose image in $gr^V_0\DX$
 is~$\gth$.

\begin{defin}\label{def:first}
The holonomic $\DX$-module $\CM$ is \textsl{regular along} $Y$ if any section $u$
of $\CM$ is annihilated by a differential operator of the form $\gvt^N + P + Q$
where $P$ is in $\CaD_{X,N-1}\cap V_{0}\DX$
and $Q$ is in $\CaD_{X,N}\cap V_{-1}\DX$
\end{defin}

\begin{defin}\label{def:regbf}
A polynomial $b$ is {\sl a regular b-function} for $u$ along $Y$ if there exists an operator $Q$ in 
$V_{-1}\DX\cap\CaD_{X,m}$ where $m$ is the degree of $b$ such that $(b(\gvt)+Q)u=0$. 
\end{defin}

It is proved in \cite{BFUNCT} that $\CM$ is regular along $Y$ if and only if all sections
of $u$ admit a regular $b$-function.

Let us denote by $\cOXY$ the formal completion of $\OX$ along $Y$, that is
$$\cOXY=\projlim k {\OX/\CI_Y^k}$$

We proved in \cite{INV} that if $\CM$ is regular along $Y$ then 
\begin{equation}
\label{equ:byx}
\forall j\ge0,\quad  \Ext_\DX^j(\CM,\cOXY)= \Ext_\DX^j(\CM,\OX)
\end{equation}

We proved in \cite{LME} that the converse is true if $Y$ is a hypersurface. 

Let $d$ be the codimension of $Y$. Then $\BIYX=\CH^d_Y(\OX)$ is the cohomology of $\OX$ with support in $Y$ and 
$\BYX=\CH^d_{[Y]}(\OX)$ is the corresponding algebraic cohomology. 

We proved also that if $\CM$ is regular along $Y$ then 
\begin{equation}\label{equ:byx2}
\forall j\ge0,\quad  \Ext_\DX^j(\CM,\BYX)= \Ext_\DX^j(\CM,\BIYX)
\end{equation}

Assume that $Y$ is a hypersurface and let $j:X\setminus Y\hookrightarrow X$ be the canonical injection.
We denote by $\OX[*Y]$ is the sheaf of meromorphic functions with poles on $Y$ and by $j_*j^{-1}\OX$
the sheaf of holomorphic functions with singularities on $Y$. Then 
$\BIYX=j_*j^{-1}\OX/\OX$ and $\BYX=\OX[*Y]/\OX$.

In \cite{LME}, we proved also that $\CM$ is regular along $Y$ if equality (\ref{equ:byx2}) is
true.

If  $Y$ is not a hypersurface, this is no more true and we have to microlocalize
the definition to get an equivalence.

\section{Microlocal Regularity}

We denote by $\EX$ the sheaf of microdifferential operators of \cite{SKK}, filtered by the order.
We will denote that filtration by $\EX = \bigcup\CE_{X,k}$ and call it the usual filtration.

In \cite{ENS}, we extended the definitions of V-filtrations and $b$-functions
to microdifferential equations and lagrangian subvarieties of the cotangent bundle.
These definitions are invariant under quantized canonical transformations.

Let $\gL$ be a lagrangian conic submanifold of the cotangent bundle $\TX$ and $\CM_\gL$ be a simple holonomic 
$\EX$-module supported by $\gL$. By definition, such a module is generated by a non degenerate section $u_\gL$, that is 
such that the ideal of the principal symbols of the microdifferential operators annihilating $u_\gL$ is the ideal of definition of $\gL$. It always exists locally \cite{SKK}.

Let $\CM_{\gL,k}=\CE_{X,k}u_\gL$. Then the V-filtration on $\EX$ along $\gL$ is defined by:
\begin{equation}
V_k\EX=\ensemble{P\in\EX}{\forall \ell\in \Z, P\CM_{\gL,\ell}\subset \CM_{\gL,\ell+k}}
\end{equation}
This filtration is independent of the choices of $\CM_\gL$ and $u_\gL$, so it is globally defined.

Let $\CO_\gL[k]$ be the sheaf of holomorphic functions on $\gL$ homogeneous of degree $k$ in the
fibers of $\gL\to X$ and $\CO_{(\gL)}=\bigoplus_{k\in\Z}\CO_\gL[k]$. 
Then there is an isomorphism between $\CM_{\gL,k}/\CM_{\gL,k-1}$ and $\CO_{\gL,k}$. By this isomorphism
the graded ring $gr^V\EX$ acts on $\CO_{(\gL)}$ and may be identified to the sheaf $\CaD_{(\gL)}$ of 
differential operators on $\gL$ with coefficients in $\CO_{(\gL)}$.

All these definitions are invariant under quantized canonical transformations \cite{ENS}.

\begin{defin}\label{def:micro}
The holonomic $\EX$-module $\CM$ is \textsl{regular along} $\gL$ (on an open set of $\TX$)
if any section $u$
of $\CM$ is annihilated by a microdifferential operator of the form $\gvt^N + P + Q$
where $P$ is in $\CE_{X,N-1}\cap V_{0}\EX$
and $Q$ is in $\CE_{X,N}\cap V_{-1}\EX$
\end{defin}

We have the fundamental result:
\begin{thm}(theorem 2.4.2 of \cite{LME})\label{thm:regular}
The holonomic $\EX$-module $\CM$ is regular along the conormal $\TYX$ to a submanifold $Y$
of $X$ on the open set $\gO$ of $\TX$ if and only if
\begin{equation}\label{equ:microsol2}
\forall j\ge0,\quad  \Ext_\EX^j(\CM,\CYX)|_\gO= \Ext_\EX^j(\CM,\CIYX)|_\gO
\end{equation}
where $\CYX$ is the sheaf of holomorphic microfunctions of \cite{SKK}.
\end{thm}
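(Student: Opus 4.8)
The plan is to reduce the two–sided comparison to the vanishing of a single derived Hom, and then to read regularity off the action of the Euler operator $\gvt$ on the ``irregularity'' sheaf. Since the V-filtration along $\gL=\TYX$, the operator $\gvt$, the notion of regularity (Definition~\ref{def:micro}) and the sheaves $\CYX,\CIYX$ are all invariant under quantized canonical transformations, I would first move by such a transformation to a normal form in which $Y=\ensemble{(x,t)}{t=0}$ and $\TYX=\ensemble{(x,t;\gx,\gt)}{t=0,\gx=0}$, so that $\gvt\equiv\sum_j t_jD_{t_j}$ modulo $V_{-1}\EX$ and $\CYX$ (resp.\ $\CIYX$) is the sheaf of tempered (resp.\ arbitrary-growth) holomorphic microfunctions along the conormal. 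The natural inclusion $\CYX\hookrightarrow\CIYX$ gives a distinguished triangle
\begin{equation}
\CYX\longrightarrow\CIYX\longrightarrow\CN\overset{+1}{\longrightarrow}
\end{equation}
with $\CN=\CIYX/\CYX$ the sheaf of germs of purely irregular microfunctions. Applying $\RHOM_\EX(\CM,\cdot)$, the equality (\ref{equ:microsol2}) for all $j$ is equivalent to $\RHOM_\EX(\CM,\CN)|_\gO=0$, so everything reduces to showing that $\CM$ is regular along $\TYX$ if and only if it carries no irregular microfunction cohomology.

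The engine for the forward direction is the following property of the irregularity sheaf: for every $c\in\C$ the operator $\gvt-c$ acts bijectively on $\CN$, hence $b(\gvt)$ acts bijectively on $\CN$ for every nonzero polynomial $b$. Indeed, solving $(\gvt-c)f=g$ along the fibres of the conormal is a first-order equation whose only ambiguity is tempered, so it becomes invertible after passing to the quotient by $\CYX$; the essential-singularity nature of germs of $\CN$ is exactly what removes the kernel and cokernel. Granting this, assume $\CM$ regular. By Definition~\ref{def:micro} it is locally generated by sections $u_i$ each annihilated by an operator of the regular b-function form $(b_i(\gvt)+Q_i)u_i=0$ with $Q_i\in V_{-1}\EX\cap\CE_{X,m_i}$. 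A local section $\phi$ of $\Ext_\EX^0(\CM,\CN)$ then sends $u_i$ to $s_i\in\CN$ with $b_i(\gvt)s_i=-Q_is_i$, i.e.\ $(\mathrm{id}+b_i(\gvt)^{-1}Q_i)s_i=0$; because $Q_i$ lies in $V_{-1}\EX$ while $b_i(\gvt)^{-1}$ has V-order $0$, the composite $b_i(\gvt)^{-1}Q_i$ is topologically nilpotent for the V-filtration on $\CN$, so $\mathrm{id}+b_i(\gvt)^{-1}Q_i$ is invertible and $s_i=0$. Feeding this into a local resolution of $\CM$ by free $\EX$-modules, the same invertibility makes the complex computing $\RHOM_\EX(\CM,\CN)$ acyclic in every degree, which gives (\ref{equ:microsol2}).

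For the converse I would argue by contraposition. If $\CM$ is not regular along $\TYX$, then by Definition~\ref{def:micro} (in the microlocal $b$-function form of Definition~\ref{def:regbf}) some section $u$ admits no regular b-function: in the Kashiwara–Malgrange V-filtration the indicial polynomial of $u$ cannot be realised by an operator of the prescribed order, which is the analytic signature of a divergent formal solution. Using the structure of the V-filtration for holonomic $\EX$-modules one then constructs from $u$ a nonzero microfunction with genuinely essential singularity realising a nonzero class in some $\Ext_\EX^j(\CM,\CN)|_\gO$, contradicting (\ref{equ:microsol2}). This produces the irregular solution that detects non-regularity and closes the equivalence.

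The main obstacle is this converse construction: turning the mere failure of a regular b-function into an honest, nonvanishing irregular microfunction cohomology class. Unlike the forward direction, which is an algebraic consequence of the bijectivity of $b(\gvt)$ on $\CN$ together with the V-adic nilpotence of the lower-order tail, the converse needs a genuine existence theorem for irregular (Gevrey, essential-singularity) solutions along the conormal, with enough control in \emph{all} degrees $j$—not merely in $\Ext^0$—to guarantee that the constructed class survives in the quotient $\CN$ and in the passage from solutions on $Y$ to microfunctions. Establishing the bijectivity of $\gvt-c$ on $\CN$ with the uniformity needed to run the resolution argument is the other delicate point, since it amounts to solving the Euler equation on the irregularity sheaf with tempered ambiguity in a way compatible with the $\EX$-action near $\gO$.
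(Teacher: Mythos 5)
You should know at the outset that the paper contains no proof of this statement: it is imported verbatim as theorem 2.4.2 of \cite{LME}, so the only possible comparison is between your sketch and the argument of that reference, where this equivalence is a principal result requiring the theory of slopes and microcharacteristic varieties. Judged on its own terms, your proposal is a strategy outline whose load-bearing steps are exactly the ones left unproved. In the forward direction everything rests on (i) the bijectivity of $b(\gvt)$ on $\CN=\CIYX/\CYX$ and (ii) the invertibility of $\mathrm{id}+b(\gvt)^{-1}Q$ via a Neumann series. Point (i) is a genuine division theorem for holomorphic microfunctions with growth conditions --- an exercise with Laurent series in dimension one on the zero section, but requiring precise symbol estimates along $\TYX$ in general --- and your justification (``a first-order equation whose only ambiguity is tempered'') is a heuristic, not a proof. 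Point (ii) needs a separated and complete (V-adic or topological) structure on $\CN$ for which $Q\in V_{-1}\EX$ shifts degrees uniformly; nothing of the sort is established, and these two facts are a large part of the analytic content hidden in \cite{LME}. Moreover, even granting (i) and (ii), your passage from $\Ext^0$ to all $\Ext^j$ does not work as written: the differentials of a free resolution of $\CM$ have no compatibility with the operators $b_i(\gvt)+Q_i$ attached to the generators (the syzygy modules are not holonomic and their sections admit no regular b-functions), so ``the same invertibility'' does not make the Hom complex acyclic. One needs an actual d\'evissage, and it is absent.

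The converse direction is an outright gap, which you acknowledge yourself: nothing in the proposal produces, from the failure of a regular b-function for some section, a nonvanishing class in some $\Ext_\EX^j(\CM,\CN)|_\gO$. This cannot be soft V-filtration bookkeeping: the paper recalls at the end of section 1 that the non-microlocalized analogue --- equality (\ref{equ:byx2}) implies regularity --- holds for hypersurfaces but fails when $Y$ has codimension at least two, so any correct proof of the converse must use the microfunction structure along all of $\TYX$ in an essential way. In \cite{LME} this half is obtained from the comparison of algebraic and analytic slopes (microcharacteristic varieties versus the irregularity complex and its perversity), which is precisely the machinery your sketch defers to ``a genuine existence theorem'' without supplying it. As it stands, both halves of the equivalence terminate in acknowledged placeholders, so the proposal is a plausible plan of attack but not a proof.
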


As the restriction to the zero section of $\TX$ of the sheaf $\CYX$ is $\BYX$
while the restriction of $\CIYX$ is $\BIYX$, the equation (\ref{equ:byx2})
is a special case of (\ref{equ:microsol2}).

If $\gL$ is not the conormal bundle to a submanifold $Y$, the result is still true 
if $\CYX$ is replaced by a simple holonomic module $\CM_\gL$
\begin{equation}\label{equ:microsol}
\forall j\ge0,\quad  \Ext_\EX^j(\CM,\CM_\gL)= \Ext_\EX^j(\CM,\EIX\ox_\EX\CM_\gL)
\end{equation}

The regularity property is also equivalent to other properties like the existence
of a regular $b$-function or conditions on the microcharacteristic varieties (see \cite{BFUNCT}).

If $\gL$ is a smooth 
part of an irreducible component of the characteristic variety of $\CM$, definition \ref{def:micro} is equivalent to what Kashiwara-Kawa{\"\i} call 
"to have regular singularities along $\gL$" (definition 1.1.11. of \cite{KKREG}) The equivalence is proved in theorem 3.1.7. of \cite{THESE}.

So definition 1.1.16. of \cite{KKREG} may be reformulated as:

\begin{defin}\label{def:regular}
The holonomic $\EX$-module $\CM$ has {\sl regular singularities} (or is {\sl regular}) if for each
irreducible component $\gL$ of its characteristic variety, $\CM$ is regular along a Zarisky open subset
of the regular part of $\gL$. 

A holonomic $\DX$-module is regular if $\EX\ox_\DX\CM$ is regular.
\end{defin}

Let us now recall two important results of Kashiwara-Kawa{\"\i}:

\begin{thm}(theorem 4.1.1. of \cite{KKHOUCH})\label{thm:KK1}
If $\CM$ is a regular holonomic $\EX$-module then it is regular
along any lagrangian submanifold of $\TX$.
\end{thm}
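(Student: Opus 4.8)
The plan is to reduce the statement to the comparison criterion of Theorem~\ref{thm:regular}. Regularity is a local property, invariant under quantized canonical transformations, and on a neighborhood of an arbitrary point of the lagrangian $\gL$ there exists a simple holonomic module $\CM_\gL$ supported by $\gL$. By the extension (\ref{equ:microsol}) of Theorem~\ref{thm:regular} to a general lagrangian, $\CM$ is regular along $\gL$ if and only if the natural morphism
\[
\RHOM_\EX(\CM,\CM_\gL)\longrightarrow \RHOM_\EX(\CM,\EIX\ox_\EX\CM_\gL)
\]
is an isomorphism. So it is enough to establish this comparison between finite and infinite order.

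First I would note that $\CM_\gL$ is itself regular holonomic: its characteristic variety is the smooth irreducible lagrangian $\gL$, and a simple holonomic module is regular along the regular part of its own characteristic variety (after a quantized canonical transformation bringing $\gL$ to a conormal $\TYX$ it becomes a module of type $\BYX$, for which the $b$-function $\gvt-c$ is trivially regular). Thus both $\CM$ and $\CM_\gL$ lie in the category of regular holonomic modules, and the isomorphism above is exactly the microlocal analogue of the comparison (\ref{equ:byx}) between solutions in $\OX$ and in $\cOXY$. To organize the proof I would use that this comparison is stable under sub-objects, quotients and extensions — from the long exact sequences of $\RHOM$ and the five lemma — so that, the regular holonomic modules forming a Serre subcategory, one is reduced to the case where $\CM$ is a simple object of that category. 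For such a $\CM$ one normalizes $\gL$ by a quantized canonical transformation and seeks a regular $b$-function for each section, equivalently the equality of the two solution complexes.

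The main obstacle is the relative position of $\gL$ and the characteristic variety $\CHM$. Away from the locus where $\gL$ is tangent to $\CHM$, the microcharacteristic variety $\CHMrs$ of \cite{BFUNCT} is lagrangian for every bifiltration parameter $(r,s)$ and a regular $b$-function is produced at once. The delicacy concentrates on the tangency locus, where the $b$-function cannot be read off directly: one must invoke the hypothesis that $\CM$ is regular along each component of $\CHM$ to prove that $\CHMrs$ remains lagrangian, independent of $(r,s)$, even along $\gL$, which then forces the $b$-function to stay regular. This passage from regularity along the characteristic variety to regularity in the transversal, tangent directions — equivalently, the finite-versus-infinite order estimate for regular holonomic modules — is the heart of the argument and the step I expect to be the most technical.
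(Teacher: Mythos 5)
This statement is one the paper does not prove at all: Theorem~\ref{thm:KK1} is quoted verbatim, with citation, from theorem~4.1.1 of Kashiwara--Kawa{\"\i} \cite{KKHOUCH}, so there is no internal proof to compare your attempt with. Judged on its own terms, your proposal has a genuine gap. Everything up to your last paragraph is reduction and bookkeeping: localization, invariance under quantized canonical transformations, the criterion (\ref{equ:microsol}), regularity of $\CM_\gL$, and the d\'evissage to simple objects (which itself quietly uses the Kashiwara--Kawa{\"\i} theorem that regular holonomic modules form an abelian category stable under subquotients and extensions). None of these steps engages the actual content of the theorem, which is exactly what your final paragraph defers: proving regularity along $\gL$ at the points where $\gL$ is tangent to, or sits in arbitrary position with respect to, $\CHM$.

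At that point your text says one ``must invoke the hypothesis that $\CM$ is regular along each component of $\CHM$ to prove that $\CHMrs$ remains lagrangian, independent of $(r,s)$, even along $\gL$.'' That sentence is a restatement of the desired conclusion in the language of the microcharacteristic/$b$-function criterion of \cite{BFUNCT}, not an argument for it: you give no mechanism by which regularity along the lagrangian components of $\CHM$ (a condition on V-filtrations taken along those lagrangians) controls the microcharacteristic varieties taken along a \emph{different} lagrangian $\gL$, and this transfer between two unrelated V-filtrations is the whole theorem. The counterexample of section~\ref{sect:cex} shows the transfer is delicate: the module $\CM=\DX/(\DX P+\DX Q)$ there is regular along some lagrangians and wildly irregular along others, so no soft d\'evissage or transversality argument can settle the tangential case. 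The actual proof in \cite{KKHOUCH} rests on the structure theory of regular holonomic $\EX$-modules developed in \cite{KKREG} (local classification after quantized canonical transformation, reduction to modules arising from regular holonomic $\DX$-modules) combined with second microlocalization along $\gL$; some input of comparable strength would have to replace your final paragraph for the sketch to become a proof.
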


\begin{thm}(theorem 6.4.1 of \cite{KKREG})\label{thm:KK2}
A holonomic $\DX$-module $\CM$  is regular on $X$ if and only if at each point $x\in X$ 
$$\RHOM_\DX(\CM,\OX)_x\simeq \RHOM_\DX(\CM,\widehat\CO_x)$$
\end{thm}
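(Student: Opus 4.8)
The plan is to route both implications through the intermediate condition that $\CM$ be regular along every point of $X$, i.e.\ along each cotangent fibre $T^*_{\{x\}}X$. The key observation is that the formal completion $\widehat\CO_x$ is precisely $\cOXY$ for the submanifold $Y=\{x\}$, so the isomorphism in the statement is exactly the $Y=\{x\}$ case of the comparison between convergent and formal solutions. Moreover the fibres $\{T^*_{\{x\}}X\}_{x\in X}$ constitute a conic lagrangian foliation of $\dTX$ (each fibre is conic, linear, hence lagrangian). Thus the theorem amounts to saying that global regularity is equivalent to regularity along every leaf of this vertical foliation, and I would establish the two implications separately.

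For the direct implication I would argue that if $\CM$ is regular then $\EX\ox_\DX\CM$ is regular in the sense of Definition \ref{def:regular}, so by Theorem \ref{thm:KK1} it is regular along every conic lagrangian submanifold of $\TX$, in particular along each fibre $T^*_{\{x\}}X$. Restricting to the zero section turns this into regularity of $\CM$ along the point $\{x\}$ in the sense of Definition \ref{def:first}, whence equation (\ref{equ:byx}) applied to $Y=\{x\}$ gives $\Ext^j_\DX(\CM,\widehat\CO_x)=\Ext^j_\DX(\CM,\OX)_x$ for all $j$, i.e.\ the desired isomorphism of solution complexes.

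For the converse I would first reverse the local step. Since a point is not a hypersurface, the hypersurface converse recalled after (\ref{equ:byx2}) is not available, so I would microlocalize: matching, via duality, the formal/convergent comparison at $x$ with the microfunction comparison (\ref{equ:microsol2}) along $\TYX=T^*_{\{x\}}X$, Theorem \ref{thm:regular} then yields microlocal regularity of $\CM$ along that fibre. Performing this at every point shows that $\CM$ is regular along each leaf of the vertical foliation. It then remains to propagate regularity from all the leaves to the irreducible components of the characteristic variety $\CHM$, obtaining regularity along a Zariski open subset of each component and hence regularity in the sense of Definition \ref{def:regular}.

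The hard part will be this last propagation step. The leaves of the vertical foliation are the cotangent fibres, which are in general transverse to the components $\gL$ of $\CHM$, so regularity along the fibres does not restrict directly to regularity along $\gL$; one must transport the V-filtration estimates from one lagrangian family to the other. I expect this to require a microlocal Cauchy type argument comparing the microcharacteristic varieties relative to the foliation with those relative to $\gL$, together with the verification that generic points of each $\gL$ can be reached while staying in the good locus of the foliation — this is exactly the mechanism isolated in the microlocal foliation theorem announced in the introduction. By contrast, the duality identification in the local step, and the zero-section restriction in the direct implication, I expect to be routine.
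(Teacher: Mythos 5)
First, a point of orientation: the paper does \emph{not} prove this statement. Theorem \ref{thm:KK2} is quoted verbatim from Kashiwara--Kawa{\"\i} (theorem 6.4.1 of \cite{KKREG}) and serves in the paper only as an external input and as the model for the microlocal analogue proved in Section 4. So there is no internal proof to compare yours with; your attempt has to stand as a free-standing argument, and as such it has two genuine gaps, one structural and one mathematical.

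The structural problem is circularity. Your converse delegates the whole propagation step to Theorem \ref{thm:microreg}, but the paper's proof of that theorem concludes by invoking theorem 6.4.5 of \cite{KKREG} --- a result from the very same section of the very same paper as the statement you are trying to prove, established there inside the same theory of regular holonomic systems. The paper's logical architecture runs in the opposite direction from yours: the results of \cite{KKREG}, including Theorem \ref{thm:KK2}, are inputs, and Theorem \ref{thm:microreg} is the output. Unless you verify that the proof of 6.4.5 in \cite{KKREG} is independent of 6.4.1 (you do not address this), your argument proves the theorem using a consequence of it.

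The mathematical problem is the step you call routine, and it is in fact the heart of the matter. Duality does convert the hypothesis $\RHOM_\DX(\CM,\OX)_x\simeq\RHOM_\DX(\CM,\widehat\CO_x)$ into a comparison of the type (\ref{equ:byx2}) at the point $\{x\}$, but (i) it does so for the holonomic \emph{dual} module $\CM^*$, not for $\CM$ (formal series pair with algebraic local cohomology, convergent germs with local cohomology with support), so you would additionally need stability of regularity under duality, yet another theorem of \cite{KKREG}; and (ii) far more seriously, what you obtain is only the \emph{zero-section} restriction of the microfunction comparison (\ref{equ:microsol2}), and the paper states explicitly at the end of Section 1 that for $Y$ not a hypersurface --- in particular for a point in dimension $\ge 2$ --- this local comparison does \emph{not} imply regularity along $Y$; one needs the comparison on the whole fibre $T^*_{\{x\}}X$, which cannot be recovered from data at the zero section alone. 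So your point-by-point reduction (``the hypothesis at $x$ gives microlocal regularity along $T^*_{\{x\}}X$'') is not merely unproved, it is false as a pointwise implication; this is precisely why Theorem \ref{thm:KK2} is a hard global theorem whose hypothesis at all points must be exploited simultaneously. By contrast, your direct implication is essentially sound: Theorem \ref{thm:KK1} (which comes from \cite{KKHOUCH} and is genuinely external), then descent from microlocal regularity along $T^*_{\{x\}}X$ to regularity along $\{x\}$ in the sense of Definition \ref{def:first} (this needs a citation, e.g.\ \cite{BFUNCT}, rather than ``restricting to the zero section''), then equation (\ref{equ:byx}) with $Y=\{x\}$.
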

Here $\OX$ is the sheaf of holomorphic functions while $\widehat\CO_x$ is the set
of formal power series at $x$.

\section{A complex Cauchy problem for $\EX$-modules}

Let $X$ be a complex analytic manifold and $Y$ be a smooth hypersurface of $X$.
The inverse image of a $\EX$-module on $Y$ has been defined in \cite{SKK} and
we refer to \cite{SCHAPBOOK} for the details.

Let $\varpi:\TX\x_XY\to\TX$, $\gro:\TX\x_XY\to\TY$ and $\gp:\TX\x_XY\to Y$ be the canonical maps.
The sheaf $\EYX$ may be defined as
$$\EYX=\gp^{-1}\OY\otimes_{\gp^{-1}\OX}\varpi^{-1} \EX$$
If $t$ is an equation of $Y$ we have $\EYX=\EX/t\EX$.

The inverse image of a coherent left $\EX$-module $\CM$ by $i:Y\hookrightarrow X$ is defined as
$$i^*\CM = \CM_Y = \gro_*\left(\EYX\otimes_{\varpi^{-1}\EX}\varpi^{-1}\CM\right)$$

The characteristic variety of a $\EX$-module is, by definition, its support.
A submanifold $Y$ of $X$ is non characteristic for a $\EX$-module $\CM$ if the map
$\gro:\varpi^{-1}\CHM\to \TY$ is proper and finite.

Let $Z$ be a smooth hypersurface of $Y$ and $\gL=\TZX$ be the conormal bundle to $Z$.
Let $(r,s)$ be two rational numbers such that $+\infty>\ge r\ge s\ge 1$.
The microcharacteristic variety of type $(r,s)$ of $\CM$ along $\gL$ has been defined
in \cite{THESE} and \cite{ENS}. It is a subvariety of
$T^*\gL$ denoted by $\CHMrs$. Thanks to lemma \ref{lem:nmicro}, we will not need the definition
of this variety.

The map $\varpi$ induces an isomorphism on $\gL$ while $\gro$ induces a map $\gro:\gL=\TZX\to\gL'=\TZY$.
The submanifold $Y$ is non microcharacteristic of type $(r,s)$ for $\CM$ if $\CHMrs$
does not meet the conormal bundle $T^*_\gL\gL'$ outside of the zero section.

\begin{lem}\label{lem:nmicro}
The conormal bundle $T^*_\gL\gL'$ is contained in the zero section hence $Z$ is non non microcharacteristic of type $(r,s)$
for any $(r,s)$ and any coherent $\EX$-module.
\end{lem}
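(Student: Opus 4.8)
The plan is to reduce the whole statement to the single observation that $\gro\colon\gL\to\gL'$ is a submersion: its conormal bundle in the sense of \cite{SCHAPBOOK} is then the zero section, so $\CHMrs$ can only meet $T^*_\gL\gL'$ along the zero section, and this for every coherent $\EX$-module $\CM$ and every $(r,s)$.

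First I would choose coordinates adapted to the flag $Z\subset Y\subset X$. Take $X=\C^n$ with coordinates $(x_1,\dots,x_n)$, put $Y=\ensemble{(x_1,\dots,x_n)}{x_n=0}$ and $Z=\ensemble{(x_1,\dots,x_n)}{x_{n-1}=x_n=0}$, and write $x'=(x_1,\dots,x_{n-2})$. In the associated symplectic coordinates $(x,\xi)$ on $\TX$ one has
\begin{equation*}
\gL=\TZX=\ensemble{(x,\xi)}{x_{n-1}=x_n=0,\ \xi_1=\dots=\xi_{n-2}=0},
\end{equation*}
parametrized by $(x',\xi_{n-1},\xi_n)$, while $\gL'=\TZY$ in $\TY$ is parametrized by $(x',\xi_{n-1})$. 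Every point of $\gL$ lies over $Y$, so $\varpi$ restricts to the identity on $\gL$; and $\gro$, being the restriction of a covector of $X$ to $Y$, forgets the conormal-to-$Y$ component, namely $\gro(x',\xi_{n-1},\xi_n)=(x',\xi_{n-1})$. This is visibly a submersion with one-dimensional fibers.

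The intrinsic counterpart, which frees the argument from coordinates, is that $\gro$ is induced by the symplectic reduction $\TX\x_XY\to\TY$ whose fibers are the conormal directions $\TYX$; its restriction to $\gL=\TZX$ is, fibrewise over $Z$, the linear projection $\TZX\to\TZY$, surjective with one-dimensional kernel $\TYX\cap\gL$. Hence $\gro$ is a submersion, so that ${}^{t}d\gro$ is injective at every point and the conormal bundle
\begin{equation*}
T^*_\gL\gL'=\ensemble{(\ell,\eta)\in\gL\x_{\gL'}T^*\gL'}{{}^{t}d_\ell\gro\,(\eta)=0}
\end{equation*}
is reduced to the zero section; its image in $T^*\gL$ under ${}^{t}d\gro$ is therefore the zero section of $T^*\gL$.

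Since $\CHMrs\subset T^*\gL$ and $T^*_\gL\gL'$ is contained in the zero section, the intersection $\CHMrs\cap T^*_\gL\gL'$ is automatically contained in the zero section, whatever $\CM$ and $(r,s)$ are; by the definition recalled before the lemma this is exactly the non-microcharacteristic condition. I expect the only genuine point requiring care to be the correct reading of $T^*_\gL\gL'$: it is the conormal bundle of the map $\gro$ (the directions annihilated by ${}^{t}d\gro$), not the conormal bundle of the fibers of $\gro$, which would be of codimension one and would make the statement fail. Once this is settled the submersion property gives the conclusion at once, and no property of the microcharacteristic variety itself is used.
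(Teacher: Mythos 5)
Your proof is correct and follows essentially the same route as the paper: choose coordinates adapted to the flag $Z\subset Y\subset X$, observe that $\gro:\gL\to\gL'$ is a submersion (the paper's proof is exactly this computation), and conclude that the conormal bundle $T^*_\gL\gL'$ reduces to the zero section, so the non-microcharacteristic condition holds trivially for every $(r,s)$ and every coherent $\EX$-module. Your added intrinsic description of $\gro|_\gL$ as the fibrewise projection $\TZX\to\TZY$ with kernel $\TYX$, and your remark on the correct reading of $T^*_\gL\gL'$ as the conormal of the map rather than of its fibers, are sound clarifications but do not change the argument.
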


\begin{proof}
The problem being local, take local coordinates $(x,y,t)$ of $X$ such that 
$Y=\ensemble{(x,y,t)\in X}{t=0}$ and $Z=\ensemble{(x,y,t)\in X}{t=0, y=0}$. Let $(x,y,t,\gx,\gh,\gt)$
be the local coordinates of $\TX$ defined by $(x,y,t)$.

Then $\gro$ is given by $\gro(x,y,0,\gx,\gh,\gt)=(x,y,\gx,\gh)$ and 
$\gL = \ensemble{(x,y,t,\gx,\gh,\gt)\in \TX}{t=0, y=0, \gx=0}$. 
So $\gL'=\gro(\gL)= \ensemble{(x,y,\gx,\gh)\in \TY}{ y=0, \gx=0}$
and the map $\gro:\TL\to\TL'$ is a submersion.

Thus the conormal bundle $T^*_\gL\gL'$ is contained in the zero section and 
$Z$ is non non microcharacteristic for any microcharacteristic variety.
\end{proof}

\begin{prop}\label{prop:cauchy}
Let $\CM$ be  holonomic $\EX$-module which is defined in a neighborhood $\gO$ of $\TZX$
and assume that $Y$ is non characteristic for $\CM$. Then we have:
\begin{align*}
\gro_*\RHOM_\EX(\CM,\CZX)&\xrightarrow{\ \sim\ }\RHOM_\EY(\CM_Y,\CZY)\\
\gro_*\RHOM_\EX\CM,\CIZX)&\xrightarrow{\ \sim\ }\RHOM_\EY(\CM_Y,\CIZY)
\end{align*}
\end{prop}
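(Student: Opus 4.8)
The two morphisms to be inverted are the Cauchy, or restriction, morphisms: a holomorphic microfunction solution of $\CM$ along $Z$ in $X$ has a well defined restriction to $Y$, which is a solution of $\CM_Y$ along $Z$ in $Y$, and likewise for microfunctions of infinite order. Each is a morphism of cohomological $\delta$-functors in the variable $\CM$, compatible with the distinguished triangles attached to short exact sequences of coherent $\EX$-modules. The plan is to reduce, by d\'evissage along a locally free resolution of $\CM$, to the single case $\CM=\EX$, and then to prove the resulting isomorphism for the coefficient sheaves alone.

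The d\'evissage is legitimate because $Y$ is non characteristic: this makes $\gro$ proper and finite on $\varpi^{-1}\CHM$, so that the inverse image $\CM_Y$ is a coherent $\EY$-module concentrated in degree $0$ and a locally free resolution of $\CM$ stays exact after inverse image. Granting this, everything reduces to $\CM=\EX$, where $\CM_Y$ is the transfer module $\EYX$ and, using that $\varpi$ is an isomorphism on $\gL$, the two lines of the statement become
\begin{equation*}
\gro_*\CZX\simeq\RHOM_\EY(\EYX,\CZY),\qquad\gro_*\CIZX\simeq\RHOM_\EY(\EYX,\CIZY).
\end{equation*}

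These are the base cases of a microlocal Cauchy--Kowalevski--Kashiwara theorem for the coefficient sheaves themselves. I would prove them by realizing $\RHOM_\EY(\EYX,-)$ as an expansion along the fibre of $\gro:\gL\to\gL'$; in the coordinates of Lemma~\ref{lem:nmicro} this fibre is the $\gt$-line, the conormal direction to $Y$ inside $\gL=\TZX$. The Cauchy morphism matches the successive terms of this expansion with the Cauchy data of a microfunction on $X$, so that the comparison with $\gro_*\CZX$ amounts to the convergence of the expansion along the $\gt$-fibre; any shift or twist produced by the fibre integration is absorbed into the twists already built into the microfunction sheaves.

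The main obstacle is precisely this \emph{convergence}. It is controlled by the microcharacteristic variety: the expansion converges as soon as $Z$ is non microcharacteristic for $\CM$, that is as soon as $\CHMrs$ meets the conormal bundle $T^*_\gL\gL'$ only along the zero section. This is exactly the conclusion of Lemma~\ref{lem:nmicro}, and it holds for every type $(r,s)$; hence there is no growth obstruction and the transfer morphism is an isomorphism for $\CZX$. For the infinite order sheaf $\CIZX$ the same computation applies, the convergence being now automatic, and Lemma~\ref{lem:nmicro} again removing any microcharacteristic obstruction. Transporting the two base-case isomorphisms back through the d\'evissage gives the two isomorphisms of the proposition.
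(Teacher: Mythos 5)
Your proof breaks at the d\'evissage step, and the gap is not repairable within your scheme. Reducing along a locally free resolution replaces $\CM$ by $\EX$, but the free module is precisely the module for which the conclusion is false: the non-characteristic hypothesis is a hypothesis on $\CM$, and it is not inherited by the terms of a free resolution (the characteristic variety of $\EX$ is all of $\TX$, so $Y$ is characteristic for it). Your proposed base case $\gro_*\CZX\simeq\RHOM_\EY(\EYX,\CZY)$ is not an isomorphism. One sees this already in the classical analogue at the zero section: for $\CM=\DX$ the Cauchy morphism is the Taylor-expansion map $\OX|_Y\to \RHOM_{\mathcal D_Y}(\mathcal D_{Y\to X},\OY)\simeq\cOXY$ into formal power series along $Y$, which is injective but not surjective; the same convergent-versus-formal discrepancy occurs for holomorphic microfunctions, and it is exactly the kind of discrepancy that regularity, the subject of this paper, measures. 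Nor can Lemma \ref{lem:nmicro} rescue the convergence: that lemma says $T^*_\gL\gL'$ lies in the zero section, so the non-microcharacteristic condition is vacuously satisfied by \emph{every} coherent $\EX$-module, free ones included. If your d\'evissage plus base case were valid, the proposition would therefore hold with no non-characteristic hypothesis at all, which is absurd --- for instance for a module supported by $\gL=\TZX$ itself the map $\gro$ restricted to $\varpi^{-1}\CHM$ has the whole $\gt$-line as fibre, $\CM_Y$ is not $\EY$-coherent, and the statement degenerates. In short, your argument never uses the non-characteristic hypothesis at the one point where it is indispensable, namely to force convergence of the expansion along the fibre.

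A correct reduction goes to cyclic modules $\EX/\EX P$ with $P$ non-characteristic, not to free ones: there a Weierstrass-type division theorem by $P$ gives simultaneously the $\EY$-coherence (indeed local freeness) of the induced module and the bijectivity of the Cauchy map, and a general $\CM$ is then treated by presenting it locally by such cyclic modules together with a dimension-shifting argument. The paper itself does none of this work: its proof is two lines. Lemma \ref{lem:nmicro} shows that $Z$ is non-microcharacteristic of every type $(r,s)$ for $\CM$, and then Proposition 3.2.2 of \cite{THESE} is invoked, giving the statement for $\CZX$ (type $(\infty,1)$) and for $\CIZX$ (type $(1,1)$, a case due earlier to Kashiwara--Schapira \cite{KSI}). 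So either you quote that general non-microcharacteristic Cauchy theorem, as the paper does, or you redo the division-theorem argument; the shortcut through free modules is not available.
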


\begin{proof}
By lemma \ref{lem:nmicro}, $Y$ is non microcharacteristic of type $(\infty,1)$ and $(1,1)$
for $\CM$. Hence we may apply directly \cite[Prop. 3.2.2.]{THESE} which give the result 
with $\CZX$ in case $(\infty,1)$  and $\CIZX$ in case $(1,1)$.
Remark that the latter case had been proved before by Kashiwara-Schapira \cite{KSI}.
\end{proof}
The result is completely different if $Z$ is not contained in $Y$ \cite{LME}.

\section{Microlocal regularity}

 The aim of this section is to prove
a microlocal version of theorem \ref{thm:KK2}.

Let $M$ be a conic symplectic manifold. Here we assume that $M$ is a complex manifold but
proposition \ref{prop:fol} is true as well in the real differentiable case.

We recall that a structure of conic symplectic manifold on a complex manifold $M$
is given by a $1$-form $\go_M$ whose differential $\gs_M=d\go_M$ is a symplectic $2$-form
on $M$. A {\sl conic lagrangian foliation} of $M$ is a foliation by conic lagrangian submanifolds.

\begin{prop}\label{prop:fol}
Let $M$ be conic symplectic manifold with a conic lagrangian foliation. There is locally
a homogeneous symplectic map from $M$ to the cotangent bundle $\TX$
of a complex manifold $X$ which transforms the leaves into the fibers of $\gp:\TX\to X$.
\end{prop}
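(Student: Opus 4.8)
The plan is to produce the symplectic chart directly from the geometry of the foliation, using the canonical $1$-form $\go_M$ as the source of the conjugate coordinates. Write $2n$ for the dimension of $M$ and let $\gth_M$ be the Euler vector field generating the conic structure, so that $\go_M=\iota_{\gth_M}\gs_M$ and the leaves of the foliation, being conic, are tangent to $\gth_M$. Since a lagrangian foliation is locally simple, its leaf space is locally a complex manifold $X$ of dimension $n$, with a submersion $p\colon M\to X$; I would choose coordinates on $X$ and pull them back to functions $x_1,\dots,x_n$ on $M$ that are constant along the leaves and have linearly independent differentials. The goal is then to manufacture conjugate functions $\gx_1,\dots,\gx_n$ so that $\gF=(x_1,\dots,x_n,\gx_1,\dots,\gx_n)$ realizes the required identification of $M$ with an open set of $\TX$.

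The key observation is that $\go_M$ vanishes on every leaf. Indeed, if $L$ is a leaf and $v$ is tangent to $L$, then $\scal{\go_M}{v}=\gs_M(\gth_M,v)$; as $L$ is conic the field $\gth_M$ is tangent to $L$, and as $L$ is lagrangian $\gs_M$ restricts to zero on $L$, so this pairing vanishes. Consequently $\go_M$ annihilates the tangent spaces to the leaves and therefore lies in the subbundle spanned by $dx_1,\dots,dx_n$, the conormal to the foliation. This forces a unique expression
\begin{equation*}
\go_M=\sum_{i=1}^n \gx_i\,dx_i,
\end{equation*}
which I take as the \emph{definition} of the functions $\gx_i$.

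It remains to check that $(x_i,\gx_i)$ is a homogeneous symplectic coordinate system. Differentiating gives $\gs_M=d\go_M=\sum_i d\gx_i\wedge dx_i$, and since $\gs_M$ is nondegenerate the $2n$ forms $dx_1,\dots,dx_n,d\gx_1,\dots,d\gx_n$ must be linearly independent; hence $(x,\gx)$ is a local chart in which $\gs_M$ and $\go_M$ take their standard cotangent-bundle form, with the leaves $\{x=\text{const}\}$ sent to the fibers of $\gp\colon\TX\to X$. For the conic compatibility I would compare Lie derivatives along $\gth_M$: Cartan's formula and $\iota_{\gth_M}\go_M=\gs_M(\gth_M,\gth_M)=0$ give $L_{\gth_M}\go_M=\iota_{\gth_M}\gs_M=\go_M$, while $L_{\gth_M}\bigl(\sum_i \gx_i\,dx_i\bigr)=\sum_i(\gth_M\gx_i)\,dx_i$ because $\gth_M x_i=0$; comparing coefficients yields $\gth_M\gx_i=\gx_i$, so the $\gx_i$ are homogeneous of degree $1$, exactly like the fiber coordinates of $\TX$. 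Thus $\gF$ is the desired homogeneous symplectic map carrying leaves to fibers.

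The main obstacle I anticipate is the local existence question: that the conic lagrangian foliation is locally simple, so that its leaf space $X$ is genuinely a complex manifold and the functions $x_i$ with independent differentials exist. Granting this, the fact that the leaves are conic, that is invariant under the $\C^*$-action generated by $\gth_M$, forces the $x_i$ to be homogeneous of degree $0$, whence $\gth_M x_i=0$ and the correct homogeneity of the $\gx_i$ follows automatically. The remaining points, namely that $\go_M$ vanishes on the leaves and that $\gF$ is symplectic and homogeneous, are then the short computations above.
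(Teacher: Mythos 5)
Your proof is correct, but it follows a genuinely different route from the paper's. The paper first remarks that functions constant on the leaves pairwise Poisson-commute (involutivity of lagrangian varieties), then invokes the Darboux theorem (in the form given by Duistermaat) to complete the leaf-constant functions $u_1,\dots,u_n$ to a non-homogeneous symplectic chart $(u,v)$, and finally restores homogeneity by setting $w_i=\gth(v_i)$ and observing that the canonical $1$-form is $\ga=\go\rfloor\gth=\sum w_i\,du_i$, so that $(u,w)$ is the desired homogeneous chart. You bypass both the Poisson-bracket lemma and Darboux: your key observation that $\go_M=\gs_M\rfloor\gth_M$ annihilates the leaf tangents (conic makes $\gth_M$ tangent to each leaf, lagrangian kills $\gs_M(\gth_M,\cdot)$ on it) lets you define the conjugate coordinates directly as the coefficients in $\go_M=\sum\gx_i\,dx_i$; nondegeneracy of $\gs_M=d\go_M=\sum d\gx_i\wedge dx_i$ then shows $(x,\gx)$ is a chart sending leaves to fibers, and Cartan's formula gives the homogeneity $\gth_M\gx_i=\gx_i$. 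In fact the two constructions yield the same coordinates---the paper's $w_i$ are exactly the coefficients of the canonical $1$-form in the coframe $du_i$---but your derivation is more self-contained and makes the role of the conic structure transparent, whereas the paper's argument leans on the standard Darboux completion and stays closer to Weinstein's non-homogeneous statement, which it cites. Both proofs share the same starting point, the local existence of leaf-space coordinates with independent differentials (holomorphic Frobenius), which you rightly flag as the one external input and which the paper likewise treats as standard; the only hypothesis you should make explicit is the compatibility $\go_M=\gs_M\rfloor\gth_M$, which is implicit in the paper's notion of conic symplectic manifold and is used in its proof as well.
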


This result has been proved in the non-homogeneous case and when $M$ is a Banach space by
Weinstein \cite[Cor. 7.2]{WEIN}.
\begin{proof}
A lagrangian variety is involutive, hence if two functions vanish on a lagrangian variety $\gL$,
their Poisson bracket vanishes on $\gL$. But the Poisson bracket depend only on the derivative of the functions
hence if two functions are constant on $\gL$, their Poisson bracket vanishes on $\gL$.

Let $\CO$ be the sheaf of ring of holomorphic functions on $M$ which are constant on the leaves
of the foliation. The Poisson bracket of two functions of $\CO$ vanishes everywhere.
Let $2n$ be the dimension of $M$. As the foliation has codimension $n$, we can find
locally $n$ functions $u_1,\dots,u_n$ in $\CO$ whose differentials are linearly independent at each point.
The Poisson bracket of two of them is always $0$ hence by the proof of 
Darboux theorem as it is given in \cite[theorem 3.5.6.]{DUIS},
there are $n$ functions $v_1,\dots,v_n$ on $M$ such that $u_1,\dots,u_n,v_1,\dots,v_n$ is a canonical
symplectic (non-homogeneous) system of coordinates for $M$. By definition, the canonical symplectic $2$-form
$\gs_M$ of $M$ is thus  equal to $\sum dv_i\wedge du_i$. 

The functions $u_1,\dots,u_n,$ are constant on the leaves of the foliation which are conic varieties,
hence they are constant on the fibers of the $\C$-action.
Let $\gth$ be Euler vector field associated to this action and for $i=1,\dots,n$ let $w_i=\gth(v_i)$.
As $\gth(u_i)=0$, $\gth$ is equal to $\sum_{i=1}^n w_i(u,v)\dsur{v_i}$.

By definition the canonical $1$-form $\ga$ of the homogeneous symplectic manifold $M$ is equal to
the inner product $\go\rfloor\gth$, hence $\ga=\sum w_i du_i$. As $d\ga=\go$, the functions
$u_1,\dots,u_n,$ $w_1,\dots,w_n$ define a a canonical
symplectic homogeneous system of coordinates for $M$.

In the coordinates $(u,w)$, the leaves are given by $u=constant$, which shows the proposition.
\end{proof}

\begin{thm}\label{thm:microreg}
Let $\gO$ be a conic open subset of $\TX$ and $(\gL_\ga)$ be a conic lagrangian 
foliation of $\gO$.

Let $\CM$ be a holonomic $\EX$-module defined on $\gO$. If $\CM$ is regular along 
each leaf  $\gL_\ga$,  then $\CM$ is a regular holonomic module on $\gO$.
\end{thm}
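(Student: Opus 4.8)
The plan is to reduce the global regularity of $\CM$ to the already-established microlocal regularity along lagrangian submanifolds, using the foliation to propagate the hypothesis from each leaf to the irreducible components of the characteristic variety. By Proposition \ref{prop:fol}, the problem is local and we may choose, after a homogeneous symplectic map (equivalently, a quantized canonical transformation, under which all the notions of regularity are invariant by \cite{ENS}), coordinates in which the leaves $\gL_\ga$ become the fibers of the projection $\gp:\TX\to X$. In these coordinates the leaf through a point is the conormal $T^*_xX$ to a point $x\in X$, and the hypothesis becomes: $\CM$ is regular along $T^*_xX$ for every $x$, i.e. $\CM$ is regular along the points. This is precisely the microlocal reformulation of the Kashiwara-Kawa{\"\i} situation described in the introduction.

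First I would recall that by Definition \ref{def:regular} it suffices to show that $\CM$ is regular along a Zariski-open subset of the regular part of each irreducible component $\gL$ of its characteristic variety $\CHM$. Fix such a component and work near a smooth point $p$ of $\gL$. The key point is that the fiber $T^*_{\gp(p)}X$ is one of the leaves of the foliation, and $\CM$ is regular along it by hypothesis; I want to transfer this regularity to $\gL$ itself at $p$. The natural tool is Theorem \ref{thm:regular}, which characterizes regularity along a given lagrangian by the comparison isomorphism \eqref{equ:microsol} between $\Ext$ groups into a simple holonomic module $\CM_\gL$ and its formalization $\EIX\ox_\EX\CM_\gL$.

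The engine that moves information between the two transverse lagrangians $\gL$ and the fiber $T^*_{\gp(p)}X$ is the complex microlocal Cauchy theorem, Proposition \ref{prop:cauchy}. Concretely, I would take $Y$ to be a generic hypersurface through $\gp(p)$ and $Z$ a hypersurface of $Y$ chosen so that $\TZX$ is the relevant lagrangian and so that $Y$ is non characteristic for $\CM$; Lemma \ref{lem:nmicro} guarantees that the non-microcharacteristic hypotheses needed to apply Proposition \ref{prop:cauchy} hold automatically. The Cauchy isomorphisms then identify the holomorphic-microfunction solution complexes $\RHOM_\EX(\CM,\CZX)$ and $\RHOM_\EX(\CM,\CIZX)$ of $\CM$ with the corresponding complexes of the restriction $\CM_Y$ on $Y$. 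An induction on the codimension of $X$ (restricting to smaller and smaller non-characteristic submanifolds) reduces the comparison \eqref{equ:microsol} for $\CM$ along $\gL$ to the one-dimensional transverse situation, where regularity along the fiber is exactly what the hypothesis supplies.

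The hard part will be organizing this induction cleanly and handling the transition from the leaf (a full fiber of $\gp$) to the arbitrary component $\gL$, which is transverse to the foliation rather than tangent to it. The delicate issue is that regularity along each leaf is a condition in the fiber direction, whereas regularity of $\CM$ as a module is a statement along $\gL$; bridging the two requires choosing the non-characteristic slices so that at each stage the Cauchy theorem applies and so that the inherited regularity of the successive restrictions $\CM_Y,\CM_{Y'},\dots$ accumulates correctly. In particular one must ensure, component by component, that the generic non-characteristic choices are compatible with keeping $p$ in the regular locus of $\gL$ and with the finiteness in Lemma \ref{lem:nmicro}; this bookkeeping, together with invoking Theorem \ref{thm:regular} at the final step to convert the Cauchy-transported $\Ext$-comparison back into regularity along $\gL$, is where the real work lies.
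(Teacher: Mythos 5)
Your scaffolding matches the paper's: straighten the foliation with Proposition \ref{prop:fol}, reduce via Definition \ref{def:regular} to regularity along a Zariski-open subset of each irreducible component $\gL$ of the characteristic variety, and use Proposition \ref{prop:cauchy} together with Theorem \ref{thm:regular} as the transfer mechanism. But the decisive step --- passing from regularity along the leaves (which are \emph{transverse} to $\gL$) to regularity along $\gL$ itself --- is exactly what you defer as ``where the real work lies,'' and the mechanism you sketch for it does not close the gap. The paper's proof is an induction on $\dim X$ with two distinct moves. Move (i), downward: apply Proposition \ref{prop:cauchy} with $Z=\{x\}$ a \emph{point} of a non-characteristic hypersurface $Y$; since the comparison isomorphism for $\CM$ along the leaf $T^*_{\{x\}}X$ holds on $\gO$ by hypothesis and Theorem \ref{thm:regular}, its direct image gives the comparison for $\CM_Y$, so $\CM_Y$ is regular along the conormals to the points of $Y$, and the induction hypothesis then makes $\CM_Y$ regular on $\gO\cap\TY$. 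Move (ii), upward: this is \emph{not} the Cauchy theorem at all, but Kashiwara-Kawa{\"\i}'s theorem 6.4.5 of \cite{KKREG}, quoted as a black box, which says that regularity of the non-characteristic restriction $\CM_{V_a}$ forces regularity of $\CM$ near a generic point $\gm$ of $\gL$. Your proposal contains no substitute for (ii).

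The substitute you do propose --- choose $Z$ with $\TZX=\gL$ and transport the comparison \eqref{equ:microsol} upward through Proposition \ref{prop:cauchy} --- fails on two counts. First, to even start that transport you need $\CM_Y$ to be regular along $\TZY$; this is not supplied by the leaf-wise hypothesis, it is full regularity of $\CM_Y$, i.e.\ the theorem itself one dimension lower, so the induction must be organized as the paper's (on $\dim X$, with move (i) feeding the inductive hypothesis), not as an induction ``on codimension'' down to a one-dimensional transverse situation. Second, and more seriously, even granting regularity of $\CM_Y$ along $\TZY$, Proposition \ref{prop:cauchy} only identifies the direct images $\gro_*\RHOM_\EX(\CM,\CZX)$ and $\gro_*\RHOM_\EX(\CM,\CIZX)$; by Lemma \ref{lem:nmicro} the map $\gro|_\gL:\gL\to\gL'$ is a submersion with positive-dimensional fibers, and these fibers are not the conic directions, so invertibility of the comparison morphism \emph{after} $\gro_*$ does not yield the invertibility on $\gO$ that Theorem \ref{thm:regular} requires. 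The downward direction used in the paper is safe precisely because there one starts from an isomorphism of sheaves on $\gO$ and pushes it forward; the upward direction loses information along the fibers of $\gro$. This asymmetry is the reason the paper must invoke the genuinely external input \cite{KKREG}, theorem 6.4.5, at this point, and without it (or an argument replacing it) your proof does not go through.
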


\begin{proof}
We will prove the proposition by induction on the dimension of $X$. As definition \ref{def:regular}
concerns only the part of the characteristic variety outside of the zero section of $\TX$,
we will work in a neighborhood of a point of $\dTX$. 

If the dimension of $X$ is $1$, the characteristic variety of $\CM$ is 
the union of the conormal bundles to isolated points of $X$. On
the other hand, a conic lagrangian foliation is necessarily given by the conormal bundles
to the points of $X$. Hence by the hypothesis, $\CM$ is regular along each components
of its characteristic variety, hence 
$\CM$ is regular by \cite[Def 1.1.16.]{KKREG}.

Assume now that the dimension of $X$ is $>1$. The problem being local on $\TX$, we may use 
proposition \ref{prop:fol} and transform the foliation into the union of
the conormals to the points of an open subset $U$ of $X$. So $\gO$ is a conic open
subset of $\gp^{-1}(U)$ and $\CM$ is regular along $T^*_{\{x\}}X$ on $\gO$ for $x\in U$.

Let $Y$ be a smooth hypersurface of $X$ which is 
non characteristic for $\CM$. Let $x$ be a point of $U \cap Y$. 
Then by lemma \ref{prop:cauchy} applied to $Z=\{x\}$ we have 
\begin{align*}
\RHOM_\EY(\CM_Y,\CZY)&\xrightarrow{\ \sim\ }\gro_*\RHOM_\EX(\CM,\CZX)\\
&\xrightarrow{\ \sim\ }\gro_*\RHOM_\EX\CM,\CIZX)\xrightarrow{\ \sim\ }\RHOM_\EY(\CM_Y,\CIZY)
\end{align*}
So $\CM_Y$ is regular along $T^*_{\{x\}}Y$ on $\gO\cap \TY$ for $x\in U\cap Y$ , hence by the hypothesis of induction,
$\CM_Y$ is regular on $\gO\cap \TY$.

According to definition \ref{def:regular}, we will now prove that $\CM$ is regular by proving that 
it is regular along a Zarisky open subset of each irreducible component of its characteristic variety. 
Such an irreducible component is a conic lagrangian subvariety of $\TX$ hence generically
we may assume that it is the conormal to a smooth subvariety $Z$ of $X$.
Let $\gL$ be such a component. If $\gL$ is the conormal to a point of $X$, then $\CM$ is regular along
$\gL$ by the hypothesis. So we may assume that $\gL$ is not the conormal to a point and 
consider a point $\gm$ of $\TX$ where $\gL$ is the conormal to a submanifold $Z$ of $X$ of dimension $\ge 1$.

Locally, there are local coordinates $(x_1\dots,x_p,t_1,\dots,t_q)$ of $X$
such that $\gm$ has coordinates $x=0$, $t=0$, $\gx=0$, $\gt=(1,0,\dots,0)$ and  $\gL=\ensemble{(x,t,\gx,\gt)\in\TX}{x=0,\gt=0}$. 
Let $V_a=\ensemble{(x,t)\in X}{t_1=a}$.
The conormal to the hypersurface $V_a$ does not meet $\gL$ hence $V_a$ is not characteristic for $\CM$ and
$\CM_{V_a}$ is regular if $a<<1$. Now we apply theorem 6.4.5. of \cite{KKREG} and get that $\CM$ is regular near $\gm$
hence regular along $\gL$.
\end{proof}

\section{Applications and examples}

Let $\gS$ be a submanifold of $\TX$ with a conic lagrangian foliation. This implies that 
$\gS$ is conic involutive \cite[th 3.6.2]{DUIS}, We assume that this foliation
is the restriction of a conic lagrangian foliation of $\TX$.

Let $\CM$ be a holonomic $\EX$-module whose characteristic variety is contained
in $\gS$ and which is regular along each leaf of this foliation. Then by theorem
\ref{thm:microreg}, $\CM$ is regular hence regular along any lagrangian submanifold 
of $\TX$.

A typical example of this situation is given by "maximally degenerated" involutive manifolds,
as we will explain now.

The canonical projection $\TX\to X$ defines a map $\TX\x_X\TX\to T^*(\TX)$ which
composed with the diagonal map $\TX\to\TX\x_X\TX$ defines the canonical 1-form of $\TX$ that is
$\go_X:\TX\to T^*(\TX)$. We now restrict our attention to the complementary $\dTX$
of the zero section in $\TX$.
The set of points of $\gS\cap\dTX$ where $\go_X|_\gS$ vanishes is isotropic 
hence of dimension less or equal
to the dimension of $X$. It is called the {\sl degeneracy locus} of $\gS$.

\begin{defin}
The involutive submanifold $\gS$ of $\dTX$ is said to be {\sl maximally degenerated} if
the degeneracy locus is of maximal dimension that is the dimension of $X$.
\end{defin}

Then the degeneracy locus is a lagrangian submanifold $\gL_0$ of $\dTX$. Being
involutive, the manifold $\gS$ has a canonical foliation by bicharacteristic leaves.

\begin{lem}
Let $\gS$ be a maximally degenerated involutive submanifold of $\dTX$ and $\gL_0$ its degeneracy locus.
\begin{enumerate}
\item $\gL_0$ is a union of bicharacteristic leaves.
\item For each leaf $L$ of $\gL_0$, there is one and only one lagrangian homogeneous submanifold of
$\gS$ whose intersection with $\gL_0$ is exactly $L$.
\item Theses lagrangian submanifolds define a foliation of $\gS$ which we will call the "lagrangian foliation".
\end{enumerate}
\end{lem}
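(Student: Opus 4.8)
The statement is about a maximally degenerated involutive submanifold $\gS$ of $\dTX$ with degeneracy locus $\gL_0$, and I want to produce the "lagrangian foliation" of $\gS$. Let me think about what each claim demands and set up local coordinates using Proposition~\ref{prop:fol} as the main engine.

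Let me reason through the structure. By definition $\gL_0$ is the set where $\go_X|_\gS$ vanishes, and since $\gS$ is maximally degenerated, $\gL_0$ is lagrangian. The bicharacteristic leaves of $\gS$ come from the involutivity: they are the integral leaves of the distribution spanned by the Hamiltonian fields $H_f$ of functions $f$ vanishing on $\gS$. A natural first move: show that $\gL_0$ is saturated under this bicharacteristic flow, which is claim (1).

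**Claim (1).** The key observation is that on an involutive $\gS$, the bicharacteristic distribution is exactly the kernel of $\gs_X|_\gS$ (the null foliation). I want to show $\gL_0$ is a union of such leaves, i.e. that the bicharacteristic field is tangent to $\gL_0$. Since $\gL_0$ is the vanishing locus of the restricted $1$-form $\go_X|_\gS$, and the Euler field $\gth$ satisfies $\go_X = \gth \rfloor \gs_X$, I can characterize $\gL_0$ as the set where $\gth$ lies in the radical of $\gs_X|_\gS$, i.e. where $\gth$ is tangent to the bicharacteristic leaf through the point. The cleanest route is to use coordinates: choose a conic lagrangian foliation of $\TX$ restricting to one on $\gS$, apply Proposition~\ref{prop:fol} to straighten it to the fiber foliation of some $\TX'$, and compute $\go_X|_\gS$ and the bicharacteristics directly. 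In such coordinates $\gS$ is defined by homogeneous equations independent of the base, the bicharacteristic leaves become explicit, and the tangency of the bicharacteristic field to $\gL_0$ becomes a short computation. I expect this to be the least delicate part.

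**Claims (2) and (3).** This is the heart. For each bicharacteristic leaf $L$ of $\gL_0$, I must build a unique lagrangian homogeneous submanifold $\gL_L \subset \gS$ with $\gL_L \cap \gL_0 = L$. The natural construction: $\gS$ carries its bicharacteristic foliation, and I want to assemble leaves of $\gL_0$ with transverse directions inside $\gS$ to reach half-dimension. In straightened coordinates — writing $M = \TX'$ with the fiber foliation and using the $(u,w)$ coordinates from the proof of Proposition~\ref{prop:fol} — the degeneracy locus $\gL_0$ and the bicharacteristic leaves both become coordinate subspaces, and the candidate lagrangian $\gL_L$ is the obvious coordinate slice through $L$. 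I then check three things: that $\gL_L$ is lagrangian (i.e. $\gs_X|_{\gL_L}=0$ and it has dimension $n$), that it is conic (contains $\gth$), and that $\gL_L \cap \gL_0 = L$ exactly. Uniqueness in (2) follows because any lagrangian submanifold of $\gS$ containing $L$ must contain the bicharacteristic leaf of $\gS$ through each of its points — since inside a lagrangian the bicharacteristic directions of the ambient involutive $\gS$ are forced — and the half-dimension count then pins it down. For (3), varying $L$ over the leaves of $\gL_0$, the slices $\gL_L$ sweep out $\gS$ and their tangent spaces fit together into an integrable distribution; integrability is automatic because in the straightened coordinates they are the level sets of the coordinate functions cutting out the $u$-directions complementary to $\gL_0$.

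**The main obstacle.** The hard part will be the uniqueness and the coherence of the foliation in (2)–(3): I must verify that the locally constructed lagrangian slices are canonical, i.e. independent of the choice of symplectic chart produced by Proposition~\ref{prop:fol}, so that they glue into a genuine global foliation of $\gS$. The cleanest way to sidestep chart-dependence is to give the intrinsic characterization: $\gL_L$ is the union, over the flow of the Euler/bicharacteristic directions transverse to $\gL_0$ within $\gS$, generated from $L$ — equivalently, the unique integral lagrangian through $L$ tangent to the $\gth$-saturation of $TL$ inside the symplectic orthogonal $(T\gS)^{\perp}$. I would prove that this distribution on $\gS$ is involutive (using $[H_f,H_g]=H_{\{f,g\}}$ and the vanishing of Poisson brackets of the leaf-defining functions, exactly as in Proposition~\ref{prop:fol}), apply Frobenius, and then match its leaves with the coordinate slices to confirm they are lagrangian and meet $\gL_0$ in a single bicharacteristic leaf. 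The conic invariance and the dimension bookkeeping are then routine, but establishing the well-definedness of the distribution intrinsically is where the real work lies.
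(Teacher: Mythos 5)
Your argument has a circularity that the paper avoids. The lemma's hypotheses are only that $\gS$ is involutive and maximally degenerated; no lagrangian foliation is given, and producing one is precisely the content of parts (2)--(3). Yet both halves of your plan rest on ``choose a conic lagrangian foliation of $\TX$ restricting to one on $\gS$'' followed by Proposition~\ref{prop:fol}. Such a foliation is not available: its existence is equivalent (after straightening) to the local normal form $\gS\simeq\ensemble{(x,t,\gx,\gt)\in\dTX}{t=0}$, which is the actual crux of the lemma. The paper obtains this normal form not from Proposition~\ref{prop:fol} but from the structure theory of maximally degenerated involutive submanifolds due to Duistermaat \cite{DUIS}: locally near $\gL_0$ there is a homogeneous symplectic transformation of $\dTX$ carrying $\gS$ to $\{t=0\}$ and $\gL_0$ to $\{t=0,\gx=0\}$, after which all three statements are read off explicitly as in Example~\ref{ex:ploit}. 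So the key input your proof needs is exactly the one it assumes.

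Your intrinsic fallback does not close this gap. You propose to integrate the distribution spanned by $(T\gS)^{\perp}$ and the Euler field $\gth$, starting from a leaf $L\subset\gL_0$. But by your own (correct) characterization, $\gL_0$ is exactly the locus where $\gth\in(T\gS)^{\perp}$; hence along $L$ your distribution reduces to the bicharacteristic distribution, and every integral manifold through $L$ stays inside $\gL_0$ --- the flow never generates the directions transverse to $\gL_0$ inside $\gS$. Moreover that distribution has rank at most $p+1$, where $p=\mathrm{codim}\,\gS$, so its integral manifolds cannot be $n$-dimensional lagrangians when $p<n-1$. Concretely, in the model $\gS=\{t=0\}$ one has $(T\gS)^{\perp}=\langle\partial_{\gt}\rangle$ and $\gth=\sum\gx_i\partial_{\gx_i}+\sum\gt_j\partial_{\gt_j}$, so your span is $\langle\partial_{\gt},\sum\gx_i\partial_{\gx_i}\rangle$, which on $\gL_0=\{\gx=0\}$ collapses to $\langle\partial_{\gt}\rangle$, whereas the true leaf $\gF_{x_0}=\{t=0,\,x=x_0\}$ requires all the $\partial_{\gx_i}$. (Your sketch for part (1) is sound and could be completed intrinsically, and your uniqueness remark --- any lagrangian contained in $\gS$ must contain the bicharacteristic directions --- is correct; but both only help once the existence of the lagrangian leaves is established, and that is where the argument fails.)
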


For a detailed study of maximally degenerated involutive submanifold, we refer to Duistermaat \cite{DUIS}.

Locally on $\gL_0$, there is a homogeneous symplectic transformation of $\dTX$ and local coordinates of $X$
$(x_1,\dots,x_{n-p},t_1,\dots,t_p)$ which
transform $\gS$ into $\ensemble{(x,t,\gx,\gt)\in \dTX}{t=0}$. Then $\gL_0=\ensemble{(x,t,\gx,\gt)\in \dTX}{t=0, \gx=0}$
and the lemma is easy to prove (see example \ref{ex:ploit}).

We may now apply theorem \ref{thm:microreg} to these involutive manifold: 

\begin{cor}\label{cor:microreg}
Let $\gS$ be a maximally degenerated submanifold of $\dTX$ with degeneracy locus $\gL_0$ and lagrangian foliation
$(L_\ga)$.

Let $\CM$ be a holonomic $\EX$-module whose characteristic variety is contained in $\gS$ and which
is regular along each leaf $L_\ga$. Then $\CM$ is regular holonomic hence regular along $\gL_0$.
\end{cor}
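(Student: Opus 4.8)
The plan is to reduce Corollary \ref{cor:microreg} directly to Theorem \ref{thm:microreg} by exhibiting the lagrangian foliation $(L_\ga)$ of $\gS$ as the restriction of a conic lagrangian foliation of an open subset of $\dTX$, so that the hypotheses of the theorem are met. The key point is that once the characteristic variety of $\CM$ is contained in $\gS$ and $\CM$ is regular along each leaf $L_\ga$, regularity of $\CM$ will be a formal consequence, and regularity along $\gL_0$ will follow from Theorem \ref{thm:KK1}.

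First I would place myself in the normal form provided just before the statement: locally on $\gL_0$ there is a homogeneous symplectic transformation and coordinates $(x_1,\dots,x_{n-p},t_1,\dots,t_p)$ of $X$ transforming $\gS$ into $\ensemble{(x,t,\gx,\gt)\in\dTX}{t=0}$, with $\gL_0=\ensemble{(x,t,\gx,\gt)\in\dTX}{t=0,\gx=0}$. In these coordinates the bicharacteristic leaves of $\gS$ are given by $\{t=0,\ \gx=\mathrm{const}\}$, and the lagrangian leaves $L_\ga$ of the lemma are $\ensemble{(x,t,\gx,\gt)}{t=0,\ \gx=\ga}$ for fixed $\ga$. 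The decisive observation is that the family $\ensemble{(x,t,\gx,\gt)\in\dTX}{\gx=\ga}$, as $\ga$ ranges over a neighborhood of the relevant value, is a conic lagrangian foliation of an open subset of $\dTX$ whose restriction to $\gS=\{t=0\}$ is exactly $(L_\ga)$. Each such $\{\gx=\ga\}$ is the conormal to a submanifold of $X$ (a translate of $\{x=0\}$ in the chosen chart, after the symplectic transformation), hence conic lagrangian; and these foliate the ambient space. This is the step I would carry out with care, checking that the leaves are genuinely lagrangian and conic and that they foliate a full neighborhood rather than merely $\gS$.

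Next I would invoke the fact, stated at the opening of Section 5, that when $\gS$ carries a conic lagrangian foliation that is the restriction of a conic lagrangian foliation of $\TX$, a holonomic $\EX$-module with $\CHM\subset\gS$ that is regular along each leaf is regular by Theorem \ref{thm:microreg}. Concretely, extend each $L_\ga$ to the ambient leaf $\widetilde L_\ga=\ensemble{(x,t,\gx,\gt)\in\dTX}{\gx=\ga}$; since $\CHM\subset\gS=\{t=0\}$, the intersection of $\CHM$ with $\widetilde L_\ga$ coincides with its intersection with $L_\ga$, so regularity of $\CM$ along $L_\ga$ gives regularity along $\widetilde L_\ga$ wherever the characteristic variety lives. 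Applying Theorem \ref{thm:microreg} to the ambient foliation $(\widetilde L_\ga)$ then yields that $\CM$ is regular holonomic. Finally, $\gL_0$ is a conic lagrangian submanifold of $\dTX$, so Theorem \ref{thm:KK1} gives that the regular module $\CM$ is regular along $\gL_0$, completing the proof.

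The main obstacle I anticipate is the compatibility between the abstract foliation $(L_\ga)$ coming from the lemma and the explicit ambient foliation $(\widetilde L_\ga)$: one must verify that extending the leaves off $\gS$ by freezing $\gx$ genuinely produces a conic lagrangian foliation of a neighborhood (not just an involutive one) and that its trace on $\gS$ is the prescribed lagrangian foliation. This is where the maximal-degeneracy hypothesis is used, since it is what forces the leaves of $\gL_0$ to have codimension matching $\dim X$ and the lagrangian leaves of $\gS$ to glue into an ambient lagrangian foliation. Once that geometric identification is secure, the homological input is entirely supplied by Theorem \ref{thm:microreg} and Theorem \ref{thm:KK1}, and nothing further needs to be computed.
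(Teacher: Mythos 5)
Your overall plan coincides with the paper's: put $\gS$ in the local normal form, recognize the lagrangian foliation of $\gS$ as the restriction of an ambient conic lagrangian foliation of $\dTX$, apply Theorem \ref{thm:microreg} to conclude that $\CM$ is regular, then Theorem \ref{thm:KK1} to get regularity along $\gL_0$. But the geometric identification on which everything rests --- and which you yourself single out as the crux --- is wrong. In the normal form $\gS=\ensemble{(x,t,\gx,\gt)\in\dTX}{t=0}$, the bicharacteristic leaves are $\ensemble{(x,t,\gx,\gt)}{t=0,\ x=x_0,\ \gx=\gx_0}$ (only $\gt$ varies; they have dimension $p=\mathrm{codim}\,\gS$), not $\{t=0,\ \gx=\mathrm{const}\}$; and the lagrangian leaves of the lemma are the conormals to the points of $Y=\{t=0\}\subset X$, that is $\ensemble{(x,t,\gx,\gt)}{t=0,\ x=x_0}$, exactly as in Example \ref{ex:ploit}, to which the paper explicitly points. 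Your candidate leaves $L_\ga=\{t=0,\ \gx=\ga\}$ cannot be the leaves of the lemma: for $\ga\ne 0$ they are not conic (the $\C^*$-action rescales $\gx$) and they do not meet $\gL_0=\{t=0,\ \gx=0\}$ at all, whereas each lagrangian leaf must meet $\gL_0$ in exactly one bicharacteristic leaf. Worse, your ambient ``foliation'' by $\widetilde L_\ga=\ensemble{(x,t,\gx,\gt)\in\dTX}{\gx=\ga}$ is not a lagrangian foliation: these sets have dimension $n+p>n$ (they are involutive, not lagrangian), and they are not the conormals to the translates of $\{x=0\}$, which are $\{x=x_0,\ \gt=0\}$. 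So the key step of your proof fails as written.

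The correct extension, implicit in the paper, is the foliation of $\dTX$ by the fibers of $\gp:\dTX\to X$, i.e.\ the conormals $T^*_{\{(x_0,t_0)\}}X=\{x=x_0,\ t=t_0\}$: the fibers over points of $Y$ are precisely the lagrangian leaves of $\gS$, and the fibers over points outside $Y$ are disjoint from $\gS$, hence from $\CHM$, so $\CM$ is trivially regular along them because it vanishes in their neighborhood. This also eliminates the unjustified transfer you invoke (``regularity of $\CM$ along $L_\ga$ gives regularity along $\widetilde L_\ga$ wherever the characteristic variety lives''): no such principle is established in the paper, and with the correct foliation none is needed, since every ambient leaf either is a leaf $L_\ga$ or avoids the support of $\CM$. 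With these corrections your argument becomes exactly the paper's: Theorem \ref{thm:microreg} yields that $\CM$ is regular holonomic, and Theorem \ref{thm:KK1} then gives regularity along the lagrangian submanifold $\gL_0$.
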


\begin{ex}\label{ex:ploit}

Let $Y$ be a submanifold of $X$ and $\gS=\gp^{-1}(Y)$ where $\gp:\dTX\to X$ is the canonical projection.
Then the degeneracy locus of $\gS$ is $\TYX$, the conormal bundle to $Y$.  The lagrangian foliation
is given by the conormal bundles to the points of $Y$.

In local coordinates $(x,t)$ of $X$ where $Y$ is given by $t=0$, the bicharacteristic leaves of $\gS$
are the sets
$$F_{x_0,\gx_0}=\ensemble{(x,t,\gx,\gt)\in \dTX}{t=0, x=x_0, \gx=\gx_0}$$
The degeneracy locus is $\gL_0=\ensemble{(x,t,\gx,\gt)\in \TX}{t=0, \gx=0, \gt\ne 0}$ while the lagrangian foliation is given
by the manifolds
$$\gF_{x_0}=\ensemble{(x,t,\gx,\gt)\in \TX}{t=0, x=x_0, \gt\ne 0}$$
\end{ex}

\begin{ex}
Let $X=\C^n$ and $\gS=\ensemble{(x,\gx)\in \TX}{\scal x \gx = 0, \gx\ne 0}$.

Then the degeneracy locus of $\gS$ is the conormal bundle to the origin of $\C^n$ and  the lagrangian foliation
is given by the conormal bundles to the hyperplanes of $X$ which contain the origin.
\end{ex}

\begin{ex}
More generally, we may consider a linear subvariety $Z$ of $X$ and $\gS$  the union of the conormal bundles to
the hyperplanes which contain $Z$. The lagrangian foliation is given by these conormals and the degeneracy locus
is the conormal bundle to $Z$.
\end{ex} 

\begin{ex}
Let $V=\ensemble{(x,t,\gx,\gt)\in T^*\C^2}{x\gx+t\gt=\gx^2/\gt}$. Then $\gL_0$ is the conormal to
the curve $S= \{t+x^2=0\}$ while the lagrangian foliation is given by the conormal to the tangent
lines to $S$.
\end{ex} 

Applied to $\DX$-modules, corollary \ref{cor:microreg} gives the following result:
\begin{cor}\label{cor:reg}
Let $\gS$ be a maximally degenerated submanifold of $\dTX$ with degeneracy locus $\gL_0$ and lagrangian foliation
$(L_\ga)$.

Let $\CM$ be a holonomic $\DX$-module. We assume that the characteristic variety of $\CM$ is contained in $\gS$
in a conic neighborhood $\gO$ of $\gL_0$ in $\dTX$. If $\CM$ is regular along each leaf $L_\ga$
in $\gO$, then $\CM$ is regular along $\gL_0$.
\end{cor}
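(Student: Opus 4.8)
The plan is to deduce Corollary~\ref{cor:reg} from its microlocal counterpart, Corollary~\ref{cor:microreg}, by passing from $\CM$ to the microlocalized module $\EX\ox_\DX\CM$; the only genuinely new ingredient will be the localization forced by the fact that the containment $\CHM\subset\gS$ is assumed only near $\gL_0$. Recall that by Definition~\ref{def:regular} the regularity of a holonomic $\DX$-module, and by the same convention its regularity along a conic lagrangian, is \emph{defined} through the associated $\EX$-module $\EX\ox_\DX\CM$. So the whole task is to transport the hypotheses and the conclusion of Corollary~\ref{cor:microreg} across this microlocalization.

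First I would set $\widetilde\CM=\EX\ox_\DX\CM$, a holonomic $\EX$-module whose support is $\CHM\cap\dTX$. By hypothesis $\CHM\cap\gO\subset\gS$, so over the conic neighborhood $\gO$ of $\gL_0$ the module $\widetilde\CM$ is supported in $\gS\cap\gO$. Next, the hypothesis that $\CM$ is regular along each leaf $L_\ga$ in $\gO$ means, by Definition~\ref{def:regular} and Definition~\ref{def:micro}, exactly that $\widetilde\CM$ is regular along each $L_\ga$ in $\gO$. Thus over $\gO$ the module $\widetilde\CM$ satisfies precisely the hypotheses of Corollary~\ref{cor:microreg}: its support lies in the maximally degenerated $\gS\cap\gO$, with degeneracy locus $\gL_0$ (as $\gL_0\subset\gO$) and lagrangian foliation the restriction of $(L_\ga)$.

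I would then run the argument of Corollary~\ref{cor:microreg} over the open set $\gO$. Extending the lagrangian foliation of $\gS\cap\gO$ to a conic lagrangian foliation of all of $\gO$ (the standing assumption of Section~5 that the foliation is the restriction of one on $\TX$), I note that $\widetilde\CM$ is regular along \emph{every} leaf of this extended foliation: along those meeting $\gS$ this is the hypothesis, and along those disjoint from $\CHM$ it is automatic since the support is empty there. Theorem~\ref{thm:microreg}, whose proof is local on $\dTX$, then applies over $\gO$ and shows that $\widetilde\CM|_\gO$ is regular holonomic on $\gO$. Since $\gL_0\subset\gO$, Theorem~\ref{thm:KK1} (a regular holonomic module is regular along any lagrangian) gives that $\widetilde\CM$ is regular along $\gL_0$. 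Translating back through Definition~\ref{def:regular}, this is exactly the assertion that $\CM$ is regular along $\gL_0$; and in the case $\gL_0=\TYX$ of Example~\ref{ex:ploit}, restriction to the zero section turns microlocal regularity along $\TYX$ into the differential regularity of $\CM$ along $Y$ in the sense of Definition~\ref{def:first}.

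The hard part will be the localization step: checking that Corollary~\ref{cor:microreg} genuinely only needs $\CHM\subset\gS$ in the neighborhood $\gO$, so that one is entitled to conclude regularity \emph{along} $\gL_0$ without knowing $\CM$ to be globally regular holonomic. Everything hinges on Theorem~\ref{thm:microreg} being local on $\dTX$ and on the reduction of regularity along all leaves of the extended foliation to the two cases above; once these are in place, the passage from $\EX$ back to $\DX$ is pure bookkeeping governed by Definition~\ref{def:regular}.
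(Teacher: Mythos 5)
Your proposal is correct and follows essentially the same route as the paper, which gives no separate proof but presents this corollary as a direct application of Corollary~\ref{cor:microreg} to $\EX\ox_\DX\CM$ over the conic neighborhood $\gO$; the localization bookkeeping you spell out (working only on $\gO$, extending the foliation, trivial regularity along leaves missing the support, locality of Theorem~\ref{thm:microreg}, then Theorem~\ref{thm:KK1}) is exactly what that application implicitly requires. One caveat: your closing side remark that restriction to the zero section converts the conclusion into regularity along $Y$ in the sense of Definition~\ref{def:first} overstates what has been proved, since $\gL_0\subset\dTX$ and the paper's own remark after the corollary emphasizes that the conclusion is only microlocal near $\TYX\cap\dTX$ --- but this extra claim is not needed for the corollary as stated.
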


Remark that in this corollary, $\CM$ is microlocally regular in a neighborhood of $\TYX\cap \dTX$ but
may be not regular as a $\DX$-module.

\begin{ex}
The singular support of a $\DX$-module is the projection of the intersection of its characteristic variety
with $\dTX$. It is also the set of points of $X$ where $\CM$ is not locally isomorphic to a power of $\OX$.
Assume that the singular support of a $\DX$-module $\CM$ is a normal crossing divisor 
and that $Y$ is an irreducible component of this divisor. Then, if $\CM$
is regular along each point of $Y$, it is regular along $Y$.

More generally, if no component of the characteristic variety is the conormal to a variety tangent to $Y$
or singular on $Y$, then the result is still true because the characteristic variety is contained
in $\gp^{-1}(Y)$ in a neighborhood of $\TYX$.
\end{ex}

\section{A counterexample}\label{sect:cex}

In this section, we give an example which shows that the condition
on the characteristic variety in corollary \ref{cor:microreg} is necessary. In particular,
if the singular support of a holonomic $\DX$-module $\CM$ has components tangent
to the manifold $Y$, then $\CM$ may be regular along each point of $Y$ but not regular
along $Y$.

Let $X=\C^2$ with coordinates $(x,t)$, we denote
by $D_x=\dsur{x}$ and $D_t=\dsur{t}$ the corresponding derivations. We consider
two differential operators $P=x^2D_x+1$ and $Q=t$.

Let $\CI$ be the ideal of $\DX$ generated by $P$ and $Q$ and $\CM$ be the holonomic 
$\DX$-module $\DX/\CI$.

We denote by $Y_0$ the hypersurface of equation $\{t=0\}$ and by $Y_\gf$ the hypersurface
of equation $\{t-\gf(x)=0\}$ where $\gf$ is a holomorphic function of one variable
defined in a neighborhood of $0$ in $\C$.

\begin{prop} \label{prop:enum}
\
\begin{enumerate}
\item The module $\CM$ is regular along $Y_0$.
\item If $\gf(0)=0$ and $\gf(x)\ne 0$ when $x\ne 0$, the module $\CM$ is not regular along $Y_\gf$.
\item The module $\CM$ is not regular along $\{0\}$. In fact, it is not regular along $T^*_{\{0\}}X$
at any point of $T^*_{\{0\}}X$.
\end{enumerate}
\end{prop}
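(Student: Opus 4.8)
The plan is to analyze the $\DX$-module $\CM=\DX/(P,Q)$ with $P=x^2D_x+1$ and $Q=t$ very explicitly, since it is essentially a rank-one system in the $x$-variable with support concentrated on $\{t=0\}$. First I would identify $\CM$ concretely: because $Q=t$ annihilates the generator $u$, the section $u$ is supported on $Y_0=\{t=0\}$, so Kashiwara's equivalence lets me view $\CM$ as (a pushforward of) a module on the line $\{t=0\}$ with coordinate $x$, on which $P=x^2D_x+1$ acts. The equation $x^2\frac{d}{dx}f+f=0$ has the irregular solution $f=e^{1/x}$, so $\CM$ should have an irregular singularity at $x=0$ along the $x$-line. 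I would compute the characteristic variety directly from the symbols of $P$ and $Q$: $\sigma(P)=x^2\gx$ and $\sigma(Q)=t$ give $\CHM=\ensemble{(x,t,\gx,\gt)}{t=0,\ x^2\gx=0}$, which decomposes into the conormal $T^*_{Y_0}X=\{t=0,\gx=0\}$ and the extra component $\{t=0,x=0\}=T^*_{\{0\}}X$ over the origin. This second, \emph{tangent} component is exactly the obstruction the counterexample is built to exhibit.

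For item (1), I would show $\CM$ is regular along $Y_0=\{t=0\}$. Here the relevant V-filtration along $Y_0$ assigns order $-1$ to $t$ and order $1$ to $D_t$, with Euler field $\gth=tD_t$ (so I may take $\gvt=tD_t$). Since $Q=t$ kills $u$, the section $u$ is automatically annihilated by $\gth u = tD_t u = D_t(tu)-u$; more to the point, I would exhibit for $u$ a regular $b$-function along $Y_0$ in the sense of Definition \ref{def:regbf}: because $tu=0$, one checks that $\gvt u = tD_t u$ already lies in $V_{-1}\DX\cdot u$, so $b(s)=s$ works, giving $(\gvt+Q')u=0$ with $Q'\in V_{-1}\DX$. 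By the cited equivalence from \cite{BFUNCT} (regularity along $Y$ $\Leftrightarrow$ existence of a regular $b$-function for every section), this proves regularity along $Y_0$. The point is that \emph{along $Y_0$} the irregularity in $x$ is invisible, since $x$ is a tangential coordinate of order $0$ for this V-filtration.

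For item (2), I would change the hypersurface to $Y_\gf=\{t=\gf(x)\}$ and compute the V-filtration along $Y_\gf$, using the Euler field $\gvt=(t-\gf(x))D_t$. The key difference is that $D_t$ no longer commutes trivially with the relation: on the support $\{t=0\}$ the defining function $t-\gf(x)$ is nonzero away from $x=0$, so passing to the graded level $gr^V\DX$ now genuinely sees the operator $P=x^2D_x+1$, which is irregular at $x=0$. I would show that no regular $b$-function exists for $u$ along $Y_\gf$ by reducing, via the restriction to $\{t=0\}$, to the classical fact that $x^2\frac{d}{dx}+1$ is an irregular connection at $x=0$ (its solution $e^{1/x}$ has no moderate growth). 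Concretely, I would argue that any putative operator $\gvt^N+P+Q$ annihilating $u$ would, after reduction modulo $t$ and passage to the symbol on the tangent component $\{t=0,x=0\}$, force a regularity relation incompatible with the irregular exponent $1/x$ of $e^{1/x}$; the condition $\gf(0)=0,\ \gf(x)\neq0$ for $x\neq0$ is what guarantees $Y_\gf$ meets $\{t=0\}$ only at the bad point $x=0$ and is tangent to the irregular locus there.

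Item (3) follows the same mechanism microlocally: regularity along $\{0\}$ means regularity along $T^*_{\{0\}}X=\{x=0,t=0\}$, and I would test this using the V-filtration along the origin (equivalently Definition \ref{def:micro} along $T^*_{\{0\}}X$), where now \emph{both} $x$ and $t$ are of negative order and $D_x,D_t$ of positive order. The operator $P=x^2D_x+1$ has $x^2D_x$ of V-order $x^2\!\cdot$(order $-2$) times $D_x$(order $+1$), i.e. $-1$, so $P$ is \emph{not} of the permitted form $\gvt^N+(\text{lower order})+(V_{-1}\text{ part})$ with the correct leading Euler term; the irregularity shows up as a genuine failure. I would make this precise by exhibiting the microcharacteristic obstruction at a point of $T^*_{\{0\}}X$, or equivalently by invoking Theorem \ref{thm:regular}: compute that $\Ext^0_\EX(\CM,\CIOX)$ and $\Ext^0_\EX(\CM,\COX)$ differ near a point of $T^*_{\{0\}}X$, because the formal solution $e^{1/x}$ exists in the holomorphic-microfunction setting $\CIOX$ but not with moderate growth in $\COX$. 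The last assertion — nonregularity at \emph{every} point of $T^*_{\{0\}}X$ — follows because the fiber coordinate $\gt$ can be normalized by scaling and the irregular exponent persists along the whole fiber.

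I expect the main obstacle to be item (2)/(3), specifically making rigorous the passage from ``the module restricts to the irregular connection $x^2\frac{d}{dx}+1$'' to ``no regular $b$-function (resp.\ no regularity along the conormal) exists.'' The restriction to the singular point must be handled carefully because $t$ and $x$ interact in the V-filtration along the tilted or pointwise-degenerate submanifold, and the cleanest route is probably to exploit the tangency of the component $T^*_{\{0\}}X$ to $Y_\gf$ together with the known classification of regular-singular rank-one connections on the line, rather than attempting a direct $b$-function computation.
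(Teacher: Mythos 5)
Your plan does not actually contain a proof of item (2), which is the heart of the proposition, and the step you flag as ``the main obstacle'' is precisely where the paper's key idea lives. The paper never argues through non-existence of a regular $b$-function. It uses the cohomological criterion of \cite{LME} for hypersurfaces (equation (\ref{equ:byx2})): if $\CM$ were regular along $Y_\gf$ then $\Ext^j_\DX(\CM,\BYfiX)=\Ext^j_\DX(\CM,\BIYfiX)$ for all $j$, so it suffices to produce a nonzero class in $\Ext^1_\DX(\CM,\CF)$ with $\CF=j_*j^{-1}\OX/\OX[*Y_\gf]$. Via the Koszul resolution of $\CM$ (which you also identified), this means exhibiting data $(g,h)$ in $\CF$ with $Ph=tg$ such that $Pf=g$, $tf=h$ has no solution $f\in\CF$. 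The construction with no counterpart in your proposal is Lemma \ref{lem:exp}: writing $\gf(x)=x^n\gy(x)$ with $\gy(0)\ne 0$, the explicit series $h(x,t)=\sum_{j\ge0}\gy(x)^j\gl_j(x)(\gf(x)-t)^{-j}$, $\gl_j(x)=\sum_{k=0}^{n-1}x^k/(nj-k)!$, is a section of $j_*j^{-1}\OX$ with $h(x,0)=\exp(1/x)$; then $P h$ vanishes on $t=0$, so $Ph=tg$ with $g\in j_*j^{-1}\OX$, while $tf=h$ is unsolvable in $\CF$ because $\exp(1/x)$ is not meromorphic. Your alternative route (rule out every operator $\gvt^N+P'+Q'$ by ``reduction modulo $t$'' plus the classification of rank-one connections) would require controlling the \emph{entire} annihilator ideal of $u$, not just the generators $P$ and $Q$, and you give no mechanism for that. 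Note also that your tangency heuristic is not the operative mechanism: item (2) holds just as well for transverse hypersurfaces $\gf(x)=\ga x$ (these give exactly the $\gL_\ga$ the paper uses in its Legendre-transform argument), so any proof resting on tangency of $Y_\gf$ to the component $T^*_{\{0\}}X$ would prove too little.

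Two further corrections. In item (1), your $b$-function claim is false as stated: since $tu=0$ one has $V_{-1}\DX\,u=0$, while $\gvt u=tD_tu=-u\ne0$, so $\gvt u$ does not lie in $V_{-1}\DX\,u$ and $b(s)=s$ cannot work; the correct choice is $b(s)=s+1$, i.e.\ $(\gvt+1)u=D_t(tu)=0$ (or simply observe, as the paper does, that a module supported by $Y_0$ is trivially regular along $Y_0$). In item (3), your strategy (compare solutions in $\COX$ and $\CIOX$ via Theorem \ref{thm:regular}) is indeed the paper's, but it requires exhibiting the solution rather than appealing to ``the formal solution $e^{1/x}$'': the function $(1/t)\exp(1/x)$ defines $u=\sum_{j\ge0}\frac{(-1)^j}{j!(j+1)!}\gd^{(j)}(x)\gd(t)$ in $\BIOX$, a solution of $Pu=Qu=0$ not in $\BOX$, and its microfunction counterpart $v\in\CIOX$ with symbol $\sum_{j\ge0}\frac{(-1)^j}{j!(j+1)!}\gx^j$ solves the system and lies in $\COX$ at no point of $T^*_{\{0\}}X$. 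Your observation that $P$ itself ``is not of the permitted form'' proves nothing, since regularity quantifies over all operators annihilating $u$; and scaling in $\gt$ cannot reach every point of the conormal fiber (its quotient by the conic action is a projective line), whereas the explicit microfunction solution handles all points at once.
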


\begin{proof}
As the module is supported by $Y_0$ it is trivially regular along $Y_0$.

To prove that $\CM$ is not regular along $Y_\gf$, we will prove that 
$$\Ext^1_\DX(\CM,\BIYfiX/\BYfiX)_0\ne 0$$.

Let $j:X\setminus Y_\gf\hookrightarrow X$ be the canonical injection. Let $j_*j^{-1}\OX$
be the sheaf of holomorphic functions with singularities on $Y_\gf$ and $\OX[*Y_\gf]$
be the sheaf of meromorphic functions with poles on $Y_\gf$. 
By definition 
$\BIYfiX=j_*j^{-1}\OX/\OX$ and $\BYfiX=\OX[*Y_\gf]/\OX$ hence we have to calculate the $\Ext^1$
with values in $\CF=j_*j^{-1}\OX/\OX[*Y_\gf]$. 
As $P$ and $Q$ commute the module $\CM$ admits a free resolution
$$0\xrightarrow{\ \ }{(\DX)^2}\xrightarrow{\ \binom P Q\ }{(\DX^2)}\xrightarrow{\ (P,Q)\ }{\DX}\xrightarrow{\ \ }{\CM} \xrightarrow{\ \ }0$$
Hence $\Ext^1_\DX(\CM,\CF)$ vanishes if and only if the system
\begin{equation}\label{equ:nosol}
\left\{
\aligned
Pf&=g\\
Qf&=h
\endaligned
\right.
\qquad\textrm{with}\qquad Ph=Qg
\end{equation}
has a solution $f$ in $\CF$ for any data $(g,h)$ in $\CF\x\CF$.

\begin{lem}\label{lem:exp}
There exists a function $h(x,t)$ in $j_*j^{-1}\OX$ such that $h(x,0)=exp(1/x)$.
\end{lem}
\begin{proof}
There is some integer $n>0$ and a function $\gy$ such that $\gf(x)=x^n\gy(x)$ and $\gy(0)\ne 0$.
Let $\gl_0(x)=1$ and for $j\ge 1$, let $\gl_j(x)=\sum_{k=0}^{n-1}\frac{x^k}{(nj-k)!}$. We have 
$\gl_0(x)+\sum_{j\ge 1}\gl_j(x)\frac 1 {x^{nj}}=exp(1/x)$. The function
$$h(x,t)=\sum_{j\ge 0} \gy(x)^j \gl_j(x)\frac 1 {(\gf(x)-t)^j}$$
is a solution to the lemma.
\end{proof}

{\sl Proof of proposition continued:}
Let $h(x,t)$ be the function given by lemma \ref{lem:exp}. We have 
$$\left(P(x,D_x)h(x,t)\right)|_{t=0}=P(x,D_x)h(x,0)=P(x,D_x)exp(1/x)=0$$
hence there exists some function $g$ in $j_*j^{-1}\OX$ such that $tg(x,t)=P(x,D_x)h(x,t)$.

As $h(x,0)$ is a function on $\C\setminus\{0\}$ which is not meromorphic the equation $Qf=h$
has no solution in $\CF=j_*j^{-1}\OX/\OX[*Y_\gf]$. Hence equation \ref{equ:nosol} has no solution and 
 $\Ext^1_\DX(\CM,\CF)$ does not vanish. This shows point 2) of the proposition.
 
To prove point 3) we consider the function $f(x,t)=(1/t)exp(1/x)$ which defines the element
$u=\sum_{j\ge 0}\frac{(-1)^j}{j!(j+1)!}\gd^{(j)}(x)\gd(t)$ in $\BIOX$. As $u$ is a solution
of $Pu=Qu=0$ which do not belong to $\BOX$, the module $\CM$ is not regular on $\{0\}$.

Moreover, we may consider the microfunction $v$ in $\CIOX$ of symbol $\sum_{j\ge 0}\frac{(-1)^j}{j!(j+1)!}\gx^j$,
that is the microfunction associated to $u$. It is a solution of $Pv=Qv=0$ which do not belong
to $\COX$ at any point of $T^*_{\{0\}}X$. This shows point 3) of the proposition.
\end{proof}

Let $Z=\C^2$ with coordinates $(y,s)$. The coordinates will be $(x,t,\gx,\gt)$ on $\TX$ and  $(y,s,\gh,\gs)$ on $\TZ$.  
The Legendre transform is defined from $\TX$ to $\TZ$ when $\gt\ne 0$ and $\gs \ne 0$ by the equations
\begin{equation*}
\left\{
\aligned
y&=\gx\gt^{-1}\\
s&=t+x\gx\gt^{-1}\\
\gh&=-x\gt\\
\gs&=\gt
\endaligned
\right.
\end{equation*}
According to \cite[\S 3.3 ch.II]{SKK}, a quantized canonical transformation associated to it is given by:
\begin{equation*}
\left\{
\aligned
x&=-D_yD_s^{-1}\\
t&=(D_ss+D_yy)D_s^{-1}=s+yD_yD_s^{-1}+2D_s^{-1}\\
D_x&=yD_s\\
D_t&=D_s
\endaligned
\right.
\end{equation*}
So, if we apply this transformation to the $\EX$-module $\EX\ox_{\gp^{-1}\DX}\gp^{-1}\CM$
with $\gp:\TX\to X$, we get the coherent $\EZ$-module $\widetilde\CN=\EZ/(\EZ\widetilde P+ \EZ\widetilde Q)$
with 
\begin{equation*}
\left\{
\aligned
\widetilde P(x,s,D_x,D_s)&=D^2_yD_s^{-2}yD_s+1=yD^2_yD_s^{-1}+2D_yD_s^{-1}+1\\
\widetilde Q(x,s,D_x,D_s)&=s+yD_yD_s^{-1}+2D_s^{-1}
\endaligned
\right.
\end{equation*}

Consider the following Lagrangian manifolds:
\begin{alignat*}{4}
&\gL_0&&=T^*_{Y_0}X&&=\ensemble{(x,t,\gx,\gt)\in\TX}{t=0,\gx=0}&&\\
&\gL_\ga&&=T^*_{Y_\gf}X&&=\ensemble{(x,t,\gx,\gt)\in\TX}{t=\ga x,\gx=-\ga\gt}&&\textrm{with } \gf(x)=\ga x\\
&\gL'_\gl&&=T^*_{Y_\gf}X&&=\ensemble{(x,t,\gx,\gt)\in\TX}{t=4\gl x^2,\gx=-8\gl x\gt}\quad&&\textrm{with } \gf(x)=4\gl x^2\\
&\gL'_0&&=T^*_{\{0\}}X&&=\ensemble{(x,t,\gx,\gt)\in\TX}{t=0,x=0}&&\\
\\
&\widetilde\gL_0&&=T^*_{\{0\}}Z&&=\ensemble{(s,y,\gh,\gs)\in\TZ}{s=0,y=0}&&\\
&\widetilde\gL_\ga&&=T^*_{\{p_\ga\}}Z&&=\ensemble{(s,y,\gh,\gs)\in\TZ}{s=0,y=\ga}\quad&&\\
&\widetilde\gL_\gl&&=T^*_{Y_\gf}Z&&=\ensemble{(s,y,\gh,\gs)\in\TY}{s+y^2/\gl=0, \gh=2(y/\gl)\gs}
  \quad&&\textrm{with } \gf(y)=-y^2/\gl\\
&\widetilde\gL'_0&&=T^*_{Y_0}Z&&=\ensemble{(s,y,\gh,\gs)\in\TZ}{s=0,\gh=0}&&
\end{alignat*}

By the Legendre transform, the manifolds $\gL_0$, $\gL_\ga$, $\gL'_\gl$ and $\gL'_0$ are respectively 
transformed into $\widetilde\gL_0$, $\widetilde\gL_\ga$, $\widetilde\gL'_\gl$ and $\widetilde\gL'_0$
outside of the set $\{\gt=0\}$. As the regularity along a lagrangian manifold is invariant under 
quantized canonical transformation, we get from proposition \ref{prop:enum} that $\widetilde\CN$ is a holonomic
$\EZ$-module which is regular along $\widetilde\gL_0$ but not regular along
$\widetilde\gL_\ga$, $\widetilde\gL'_\gl$ and $\widetilde\gL'_0$.

Let again be  $Z=\C^2$ with coordinates $(y,s)$ and let us define the following differential operators:
\begin{equation*}
\left\{
\aligned
P_1(x,s,D_x,D_s)&=yD^2_y+2D_y+D_s\\
Q_1(x,s,D_x,D_s)&=sD_s+yD_y+3
\endaligned
\right.
\end{equation*}
Let $\CJ$ be the ideal of $\DZ$ generated by $P_1$ and $Q_1$ and $\CN$ be the holonomic 
$\DZ$-module $\DZ/\CJ$.

\begin{prop}\label{prop:mainex}
The module $\CN$ is regular along all points of $Y=\ensemble{(y,s}{s=y^2}$ but is not
regular along $Y$ itself.
\end{prop}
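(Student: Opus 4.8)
The plan is to exploit that the two operators $P_1,Q_1$ defining $\CN$ are, up to a left factor $D_s$, exactly the microdifferential operators $\widetilde P,\widetilde Q$ of the previous construction: a direct computation gives $P_1=D_s\widetilde P$ and $Q_1=D_s\widetilde Q$, and since $D_s$ is invertible in $\EZ$ on the open set $\{\gs\ne0\}$, we get $\EZ\ox_{\DZ}\CN\simeq\widetilde\CN$ there. Thus all the regularity information already obtained for $\widetilde\CN$ (from Proposition \ref{prop:enum} via the quantized canonical transformation) is available for the microlocalization of $\CN$ away from $\{\gs=0\}$. I would then treat the two assertions separately: non-regularity along $Y$ comes entirely from the region $\{\gs\ne0\}$, while regularity along the points of $Y$ reduces to the single interesting point, the origin, handled by a b-function argument.

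For the negative part, note that $Y=\{s=y^2\}$ is the parabola, so its conormal $T^*_YZ=\{s=y^2,\gh=-2y\gs\}$ coincides, off the zero section, with $\widetilde\gL_\gl$ for $\gl=-1$ (the case $\gf(x)=4\gl x^2=-4x^2$ of the table). Since $\gf(x)=-4x^2$ satisfies the hypotheses of Proposition \ref{prop:enum}(2), $\CM$ is not regular along $Y_\gf$, hence by invariance of regularity under the quantized canonical transformation $\widetilde\CN$ is not regular along $\widetilde\gL_{-1}$. Every nonzero covector of $T^*_YZ$ lies in $\{\gs\ne0\}$, where $\EZ\ox_{\DZ}\CN=\widetilde\CN$; therefore $\EZ\ox_{\DZ}\CN$ is not regular along $T^*_YZ$. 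As regularity of the $\DZ$-module $\CN$ along $Y$ would imply, through the microlocalization of Definition \ref{def:first} into Definition \ref{def:micro}, microlocal regularity along $T^*_YZ$ at every point, we conclude that $\CN$ is not regular along $Y$.

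For the positive part, I would first localize on $Z$. The principal symbols of $P_1$ and $Q_1$ are $y\gh^2$ and $s\gs+y\gh$, so the characteristic variety of $\CN$ is contained in $\{y\gh^2=0,\ s\gs+y\gh=0\}$ and its singular support is contained in the two lines $\{y=0\}\cup\{s=0\}$. The parabola $Y$ meets this set only at the origin; hence for every $p=(y_0,y_0^2)\in Y$ with $y_0\ne0$ the module $\CN$ is locally isomorphic to a power of $\CO_Z$ near $p$ and is trivially regular along $\{p\}$. It remains to prove regularity along the point $\{0\}$. Here the crucial observation is that $Q_1=\gvt+3$, where $\gvt=yD_y+sD_s$ is a lift of the Euler vector field of the normal bundle to $\{0\}$; thus the generator $u$ of $\CN$ satisfies $(\gvt+3)u=0$, i.e. admits the regular b-function $b(T)=T+3$ (with $Q=0\in V_{-1}\DZ$). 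Moreover $P_1$ is homogeneous of V-order $1$ for the V-filtration along $\{0\}$, so both relations are V-homogeneous and $\gvt$ acts locally finitely on $\CN$: if $P$ has V-degree $k$ then $\gvt(Pu)=(k-3)Pu$, so every V-homogeneous section is again an eigenvector, and an arbitrary section is a finite sum of such eigenvectors, hence killed by a product $\prod_i(\gvt-c_i)$, a regular b-function. By the criterion of \cite{BFUNCT} this shows $\CN$ is regular along $\{0\}$, completing the positive part.

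The main obstacle is precisely the regularity along $\{0\}$: everything hinges on showing that the regular b-function of the generator propagates to \emph{all} sections, i.e. that $\gvt$ acts locally finitely on the whole module, for which the V-homogeneity of both $P_1$ and $Q_1$ is exactly what is needed. This b-function route is what lets me avoid the genuinely delicate microlocal analysis along $\{\gs=0,\gh\ne0\}$, where the conormals $T^*_{\{0\}}Z$ and $T^*_{\{y=0\}}Z$ cross and where the identification with $\widetilde\CN$ is no longer available; checking microlocal regularity of $\EZ\ox_{\DZ}\CN$ along $T^*_{\{0\}}Z$ directly at those points would require a separate canonical transformation and a reduction of $P_1,Q_1$ (using $P_1=D_y(yD_y+1)+D_s$ near $\gh\ne0$), which the global b-function argument renders unnecessary. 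A secondary point to treat with care is the direction in which the $\DZ$-module/microdifferential translation of regularity is invoked in the negative part.
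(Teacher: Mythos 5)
Your overall architecture coincides with the paper's. The negative part is exactly the paper's argument, and your explicit factorizations $P_1=D_s\widetilde P$, $Q_1=D_s\widetilde Q$ correctly justify what the paper compresses into ``by definition'': on $\{\gs\ne 0\}$ one has $\EZ\ox_{\DZ}\CN\simeq\widetilde\CN$, all nonzero covectors of $T^*_YZ$ lie there, and non-regularity along $\widetilde\gL_{-1}=T^*_YZ$ comes from Proposition \ref{prop:enum}(2) applied to $\gf(x)=-4x^2$. Likewise, the points of $Y\setminus\{0\}$ are handled by ellipticity, as in the paper. The divergence is at the origin: the paper disposes of it in one line, observing that $Q_1=\gvt+3$ is a regular b-function for the generator and invoking the criterion of \cite{BFUNCT}, whereas you try to \emph{prove} that the generator's b-function propagates to all sections.

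That added argument contains a genuine error, and it sits exactly at the step you yourself identify as the crux. You claim $\gvt$ acts locally finitely on $\CN$ because ``an arbitrary section is a finite sum of V-homogeneous sections, hence killed by a product $\prod_i(\gvt-c_i)$''. Over $\DZ$ this is false: a germ of section is $Ru$ with $R$ a differential operator with \emph{analytic} coefficients, and such an $R$ is an infinite (convergent) sum of V-homogeneous components; only polynomial coefficients give finite sums. Concretely, for $v=e^{y}u$ one computes $(\gvt+3-k)\bigl(y^{k}e^{y}u\bigr)=y^{k+1}e^{y}u$, hence $\prod_{k=0}^{N}(\gvt+3-k)\,(e^{y}u)=y^{N+1}e^{y}u$, and this is nonzero for every $N$: indeed $\CN\ne 0$ on $\{s\ne 0\}$, since $Q_1f=0$ forces homogeneity of degree $-3$ and $P_1=D_y^2\,y+D_s$ then reduces the system to Kummer's equation $wh''+(2-w)h'-3h=0$ in $w=y/s$, which has the entire solution $M(3,2,w)$, giving the nonzero holomorphic solution $s^{-3}M(3,2,y/s)$; and $y^{N+1}e^{y}$ is invertible where $y\ne0$. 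So $e^{y}u$ is annihilated by \emph{no} polynomial in $\gvt$ alone, and the local-finiteness argument collapses. Note that $e^{y}u$ does admit a regular b-function, but only thanks to the correction term allowed in Definition \ref{def:regbf}: $\bigl(\prod_{k=0}^{N}(\gvt+3-k)-y^{N+1}\bigr)(e^{y}u)=0$ with $-y^{N+1}\in V_{-1}\DZ\cap\CaD_{Z,0}$. Producing such a correction for an arbitrary section $Ru$ is the real difficulty: your homogeneity computation only yields $b(\gvt)(Ru)=Su$ with $S\in V_{-1}\DZ$, and $Su$ is not visibly of the form $Q\cdot(Ru)$. That propagation -- a regular b-function for a generator implies regularity of the whole module -- is precisely the nontrivial content of the criterion of \cite{BFUNCT} that the paper cites; you must either invoke it, as the paper does, or reprove it, and the eigenvector shortcut does not do so. (A minor point: with the paper's conventions $[\gvt,P]=-kP$ for $P$ of V-order $k$, so the eigenvalue is $-(k+3)$, not $k-3$; this is harmless.)
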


\begin{rem}
We have also that $\CN$ is irregular along each point of $Y_0=\ensemble{(y,s}{s=0}$ except $0$
and is irregular along $Y_0$.
\end{rem}

\begin{proof}
The operator $Q_1$ is a $b$-function for $\CN$ at $0$ hence $\CN$ is regular along this point (locally but also
microlocally at each point of $T^*_{0}Z$). Moreover, the characteristic variety of $\CN$ is contained in
$\ensemble{(s,y,\gh,\gs)\in\TZ}{s\gs=0,y\gh=0}$ hence $\CN$ is elliptic at each point of $Y$ except $0$, so
it is regular along these points.

The module $\EZ\ox_{\gp^{-1}\DZ}\gp^{-1}\CN$ is by definition isomorphic to $\widetilde\CN$ outside of 
the set $\{\gt=0\}$. We have seen that $\widetilde\CN$ is not regular along $\widetilde\gL'_\gl$. 
Hence, taking $\gl=-1$, we get that $\CN$ is not regular
along $Y$ as well as $Y_0$ and its non zero points.
\end{proof}

\providecommand{\bysame}{\leavevmode\hbox to3em{\hrulefill}\thinspace}
\providecommand{\MR}{\relax\ifhmode\unskip\space\fi MR }
\providecommand{\MRhref}[2]{%
  \href{http://www.ams.org/mathscinet-getitem?mr=#1}{#2}
}
\providecommand{\href}[2]{#2}

\vespa\vespa
\begin{flushright}
  \begin{minipage}[t]{8cm}

Universit\'e de Grenoble

   Institut Fourier

UMR 5584  CNRS/UJF

   BP 74

   38402 St Martin d'H\`eres Cedex

{\bf email:} Yves.Laurent@ujf-grenoble.fr

  \end{minipage}
\end{flushright}

\enddocument